\newcommand\be{\begin{equation}}
\newcommand\ee{\end{equation}}
\newcommand\bea{\begin{eqnarray}}
\newcommand\eea{\end{eqnarray}}
\newcommand\bi{\begin{itemize}}
\newcommand\ei{\end{itemize}}
\newcommand\ben{\begin{enumerate}}
\newcommand\een{\end{enumerate}}
\newtheorem{thm}{Theorem}[section]
\newtheorem{lem}[thm]{Lemma}
\newcommand{\burl}[1]{\textcolor{blue}{\url{#1}}}
\numberwithin{equation}{section}
\begin{document}
\title[Random Matrix Ensembles with a Polynomial Link Function]{Limiting Spectral Measures for Random Matrix Ensembles with a Polynomial Link Function}
\author{Kirk Swanson}
\email{swansonkirk80@gmail.com}
\address{Department of Mathematics and Statistics, Williams College, Williamstown, MA 01267}
\author{Steven J. Miller}
\email{sjm1@williams.edu, Steven.Miller.MC.96@aya.yale.edu} \address{Department of Mathematics and Statistics, Williams College, Williamstown, MA 01267}
\author{Kimsy Tor}\email{ktor.student@manhattan.edu}
\address{Department of Mathematics, Manhattan College, Bronx, NY 10463}
\author{Karl Winsor}
\email{krlwnsr@umich.edu}
\address{Department of Mathematics, University of Michigan, Ann Arbor, MI 48109}
\subjclass[2010]{15B52, 60F05, 11D45 (primary), 60F15, 60G57, 62E20 (secondary)}
\keywords{limiting rescaled spectral measure, circulant and Toeplitz matrices, random matrix theory, method of moments, Diophantine equations}
\date{\today}
\thanks{This work was supported by NSF grants DMS1347804 and DMS1265673, and Williams College. We thank Arup Bose and Fred Strauch for many helpful discussions.}
\begin{abstract}
Consider the ensembles of real symmetric Toeplitz matrices and real symmetric Hankel matrices whose entries are i.i.d. random variables chosen from a fixed probability distribution \textit{p} of mean 0, variance 1, and finite higher moments.  Previous work on real symmetric Toeplitz matrices shows that the spectral measures, or densities of normalized eigenvalues, converge almost surely to a universal near-Gaussian distribution, while previous work on real symmetric Hankel matrices shows that the spectral measures converge almost surely to a universal non-unimodal distribution.  Real symmetric Toeplitz matrices are constant along the diagonals, while real symmetric Hankel matrices are constant along the skew diagonals.  We generalize the Toeplitz and Hankel matrices to study matrices that are constant along some curve described by a real-valued bivariate polynomial.  Using the Method of Moments and an analysis of the resulting Diophantine equations, we show that the spectral measures associated with linear bivariate polynomials converge in probability and almost surely to universal non-semicircular distributions.  We prove that these limiting distributions approach the semicircle in the limit of large values of the polynomial coefficients.  We then prove that the spectral measures associated with the sum or difference of any two real-valued polynomials with different degrees converge in probability and almost surely to a universal semicircular distribution.
\end{abstract}
\maketitle
\normalsize
\section{Introduction}\label{sec:sec1}
\subsection{Background}\label{sec:sec11}
Ever since Eugene Wigner conjectured that random matrices could be used to approximate the spacing distribution between adjacent energy levels in heavy nuclei (see \cite{Wig1, Wig2, Wig3, Wig4, Wig5}), random matrix theory has been a powerful tool in modeling many complex systems, as described in Firk and Miller \cite{FM} and exemplified in Miller, Novikoff, and Sabelli \cite{MNS}, Baik, Borodin, Deift, and Suidan \cite{BBDS}, and Krbalek and Seba \cite{KrSe}, to name a few diverse representatives.  For example, while the Schr{\"o}dinger equation can be solved for the simple nuclear structure of hydrogen, there is no known closed-form expression for the nuclear energy levels of uranium, which has over 200 protons and neutrons interacting in its nucleus.   Wigner's great insight was to model the infinite dimensional Hamiltonian matrix with the limit of $\textit{N}\times\textit{N}$ real symmetric matrices, in which each entry is chosen randomly from a Gaussian density.  For each \textit{N}, one calculates averages over a weighted set of all possible matrices of size \textit{N}, such as the average density of normalized eigenvalues (analagous to the energy levels).  Similar to the Central Limit Theorem, as $\textit{N}\rightarrow\infty$ the behavior of the normalized eigenvalues of almost any randomly chosen matrix agrees with the limits of the system averages.
Although Wigner chose the Gaussian density, one could instead choose a generic probability distribution \textit{p} with mean 0, variance 1, and finite higher moments.  For example, for real symmetric matrices with entries chosen as i.i.d. random variables from such a probability distribution, it has been proved that the limiting distribution of the density of normalized eigenvalues is the semicircle law (see \cite{Wig2,Wig6}) independent of \textit{p}.  Although there are interesting results that have been proved for the adjacent spacing distribution of normalized eigenvalues that only recently have been proved to hold for all such $p$ (see for example \cite{ERSY, ESY, TV1, TV2}), we will focus only on densities in this paper.
It is interesting to explore how the eigenvalue behavior changes when additional structure is imposed on real symmetric matrices.  Real symmetric matrices have $\frac{N(N+1)}{2}$ independent parameters, from the upper triangle of the matrix.  For sub-ensembles that have fewer degrees of freedom, different limiting distributions can arise.  One example of a thin subset that only has \textit{N} degrees of freedom is the $\textit{N}\times\textit{N}$ real symmetric Toeplitz matrices, which are constant along the diagonals:
\begin{align}\label{1.1} T_N\ = \ \left( \begin{array}{ccccc}
a_{0} & a_{1}& a_{2}& \dots & a_{N-1}\\
a_{1} & a_{0}& a_{1}& \dots & a_{N-2}\\
a_{2} & a_{1}& a_{0}& \dots & a_{N-3}\\
\vdots & \vdots & \vdots & \ddots & \vdots \\
a_{N-1} & a_{N-2}& a_{N-3}& \dots & a_{0} \end{array} \right). \end{align}
Another is the real symmetric Hankel matrices, which are constant along the skew diagonals:
\begin{align}\label{1.2} H_N\ = \ \left( \begin{array}{ccccc}
a_{2} & a_{3}& a_{4}& \dots & a_{N+1}\\
a_{3} & a_{4}& a_{5}& \dots & a_{N+2}\\
a_{4} & a_{5}& a_{6}& \dots & a_{N+3}\\
\vdots & \vdots & \vdots & \ddots & \vdots \\
a_{N+1} & a_{N+2}& a_{N+3}& \dots & a_{2N} \end{array} \right). \end{align}
Bai \cite{Bai} proposed studying the density of normalized eigenvalues of real symmetric Toeplitz
matrices.  Initially, numerical investigations suggested that the density of normalized eigenvalues might converge to the Gaussian; however, Bose, Chatterjee, and Gangopadhyay \cite{BCG}, Bryc, Dembo, and Jiang \cite{BDJ}, and Hammond and Miller \cite{HM} showed that this is not the case by calculating the fourth moment to be 2 $\frac{2}{3}$, close to but not equal to the standard Gaussian's fourth moment of 3.  Massey, Miller, and Sinsheimer \cite{MMS} then found that by imposing additional structure on the Toeplitz matrices by making the first row a palindrome, the limiting spectral measures converge in probability and almost surely to the standard Gaussian.  Other generalizations include studying the effect of increasing the palindromicity of palindromic Toeplitz matrices (see \cite {JMP}) and scaling each entry by the square root of the number of times that entry appears in the matrix (see \cite{BB}).  In this paper we explore another generalization of the real symmetric Toeplitz and Hankel matrices by studying matrices that are constant along some curve described by a real-valued bivariate polynomial.  We begin by listing our notation below and then stating our results in \S\ref{sec:sec13}.
\subsection{Notation}\label{sec:sec12}
\subsubsection{Random Matrices and Link Functions.}
A $\textit{random matrix}$ $A_N$ is an $N \times N$ matrix whose entries are random variables drawn from a fixed probability distribution \textit{p} with mean 0, variance 1, and finite higher moments.  A particular random matrix is constructed from a sequence of i.i.d. random variables with distribution \textit{p} called the $\textit{input sequence}$: $\{a_i:i \in \mathbb{Z}\}$.  The pattern of a random matrix is determined by the $\textit{link function}$, $L(i,j)$, which maps the entry locations of a matrix $(i,j)$ to the input sequence of random variables.\footnote{We follow the expositions in \cite{B,HM,X} in the remaining introductory sections.  }  Real symmetric Toeplitz matrices, for example, have the link function  $L_{\rm Toeplitz}(i,j) = |i-j|$.  The probability that the entry at $(i,j)$ lies in the interval $[\alpha_{ij},\beta_{ij}]$ for a matrix $A_N$ contained in the \textit{outcome space} $\Omega_N$ is given by
\begin{equation}\label{1.3}
\text{Prob}\left(A_N\in\Omega_N: a_{L(i,j)}\in[\alpha_{ij},\beta_{ij}]\right)\ =\ \prod_{1\le i \le j \le N}\int_{\alpha_{ij}}^{\beta_{ij}}p(x)dx.
\end{equation}
The probability measure for the normalized eigenvalues of a real symmetric matrix $A_N$, called the \textit{empirical spectral measure}, is denoted by\footnote{The $N$ real eigenvalues are ordered as $\lambda_1\le\lambda_2\le\cdots\le\lambda_N$.}
\begin{align}\label{1.4}
\begin{split}
\mu_{A_N}(x)dx\ \ := \ \ \frac{1}{N}\sum_{i=1}^N\delta\left(x-\frac{\lambda_i(A_N)}{\sqrt{N}}\right)dx,
\end{split}
\end{align}
where $\delta(x)$ is the Dirac-delta functional.\footnote{See \cite{MT} for a heuristic for the eigenvalues of our $N\times N$ matrix ensembles being roughly of size $\sqrt{N}$.  Although we have chosen to keep $\sqrt{N}$ as the normalization, it would be just as effective to scale them by $2\sqrt{N}$.  For a scaling of $\sqrt{N}$, real symmetric matrices have a semicircular limiting spectral distribution given by $\frac{1}{2\pi}\sqrt{4-x^2}$  for $|x| \le 2$ and 0 otherwise, whose moments are exactly the Catalan numbers.  For a scaling of 2$\sqrt{N}$, the distribution is $\frac{2}{\pi}\sqrt{1-x^2}$ for $|x|\le1$ and 0 otherwise, whose moments are proportional to the Catalan numbers.\label{foot:foot3}}  We can use this to define the $\textit{empirical spectral}\\\textit{distribution}$, a cumulative distribution function, for a matrix of size $N$:
\begin{align}
\begin{split}
F_{A_N}(x) \ := \ \int_{-\infty}^x\mu_{A_N}(x)dx\ = \ \frac{\#\left\{ i \le N : \frac{\lambda_i}{\sqrt{N}}\le x\right\}}{N}.
\end{split}
\end{align}
\subsubsection{The Method of Moments.}
We want to understand the distribution of eigenvalues in the limit as the size of the matrices grows to infinity, which amounts to understanding convergence properties of the cumulative distribution functions of typical random matrices.  The critical connection is established in the following moment convergence theorem.\footnote{See \cite{B} for more details.}
\begin{thm}[The Method of Moments]\label{thm:thm11} Let $\{A_N\}_{N=1}^{\infty}$ be a sequence of random variables and $\{F_{N}\}_{N=1}^{\infty}$ be the corresponding sequence of cumulative distribution functions such that their moments, $M_{k}(N) = \int_{-\infty}^{\infty}x^kdF_{N}(x)$, exist for all positive integers $k$.  Let $\{M_{k}\}_{k=1}^{\infty}$ be a sequence of moments that uniquely determine a probability distribution\footnote{Let $\{M_k\}_{k=1}^{\infty}$ be the sequence of moments for the limiting spectral distribution $F$.  Then, $F$ is the unique distribution with these moments if $\text{lim}_{k\rightarrow\infty}\text{inf}\frac{1}{k}M_{2k}^{\frac{1}{2k}}<\infty$.  This is called \textit{Riesz's Condition} (see \cite{B}).} whose cumulative distribution function is denoted by $F$.  If $\text{lim}_{N\rightarrow \infty}M_{k}(N) = M_k$ for each positive integer $k$, then the sequence of cumulative distribution functions for the random variables converges weakly to the limiting distribution: $\text{lim}_{N \rightarrow \infty}F_N = F$. \end{thm}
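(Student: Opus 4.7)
The plan is to prove the theorem via a standard subsequence argument: I will show that every subsequence of $\{F_N\}$ has a further sub-subsequence converging weakly to $F$, which forces the full sequence to converge weakly to $F$. The first step is to establish tightness of $\{F_N\}$. Since $M_2(N)\to M_2<\infty$, the second moments are uniformly bounded in $N$, and Markov's inequality yields
\[ 1 - F_N(R) + F_N(-R) \;\le\; \frac{M_2(N)}{R^2}, \]
which tends to $0$ uniformly in $N$ as $R\to\infty$. Hence no mass escapes to infinity.

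By Helly's selection theorem, every subsequence $\{F_{N_j}\}$ admits a further subsequence $\{F_{N_{j_l}}\}$ that converges weakly to some nondecreasing right-continuous function $G$; tightness upgrades $G$ to a genuine probability distribution. The crucial next step is to show that the moments of $G$ agree with $\{M_k\}$. Weak convergence alone does not preserve integrals of the unbounded functions $x^k$, so I would establish uniform integrability by borrowing one extra moment: for any $k$ and any $R>0$,
\[ \int_{|x|>R}|x|^k\, dF_N(x) \;\le\; \frac{1}{R}\int_{-\infty}^{\infty}|x|^{k+1}\, dF_N(x), \]
and the right-hand side is uniformly bounded in $N$ because $M_{k+1}(N)$ and $M_{k+2}(N)$ both converge (and $|x|^{k+1}\le 1 + x^{k+2}$ lets us pass from absolute to ordinary moments). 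Combined with weak convergence on compact intervals, this yields $\int x^k\, dG(x) = \lim_{l\to\infty} M_k(N_{j_l}) = M_k$ for every positive integer $k$.

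Finally, Riesz's condition guarantees that $\{M_k\}$ uniquely determines its distribution, so $G = F$. Since every subsequence of $\{F_N\}$ admits a further subsequence converging weakly to $F$, the full sequence converges weakly to $F$, as desired. The main obstacle is the moment-passage step: weak convergence is far too coarse to preserve expectations of unbounded test functions on its own, and the whole argument hinges on upgrading the boundedness of higher moments into uniform control of tail mass so that truncation errors vanish uniformly in $N$. Everything else is routine once this uniform integrability is in hand.
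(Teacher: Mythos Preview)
Your proof is correct and follows the standard tightness--Helly--uniform-integrability argument. Note, however, that the paper does not actually prove this theorem: it is stated as a background result (the classical moment convergence theorem) with a reference to \cite{B} for details, so there is no in-paper proof to compare against. Your argument is exactly the textbook one and would serve as the missing proof.
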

The $k\textsuperscript{th}$ moment for the empirical spectral distribution of a random matrix $A_N$ is
\begin{align}\label{1.6}
\begin{split}
M_{k}(A_N)\ :=\ \int_{-\infty}^{\infty}x^k\mu_{A_N}(x)dx\ =\ \frac{1}{N^{\frac{k}{2}+1}}\sum_{i=1}^{N}\lambda_i^k(A_N).
\end{split}
\end{align}
To explore the behavior of the limiting spectral distribution for a typical sequence of random matrices, we compute the average moment values over all such matrices where, for a given \textit{N}, the average $k\textsuperscript{th}$ moment for matrices of size $N$ weighted by Eq. \eqref{1.3} is
\begin{align}
\begin{split}
M_{k}(N)\ :=\ \mathbb{E}\left[M_{k}(A_N)\right],
\end{split}
\end{align}
and the $k\textsuperscript{th}$ moment of the limiting spectral distribution is
\begin{align}
\begin{split}
M_{k} \ := \  \text{lim}_{N\rightarrow \infty}M_{k}(N).
\end{split}
\end{align}
The Moment Convergence Theorem for Random Matrices, which follows directly from the Method of Moments, serves as our main tool for understanding the limiting distribution.\footnote{See \cite{B} for more details.}
\begin{thm}[Moment Convergence Theorem for Random Matrices]\label{thm:thm12} Suppose $\{A_N\}_{N=1}^{\infty}$ is an arbitrary sequence of random matrices with distributions $\{F_{A_N}\}_{N=1}^{\infty}$.  Suppose there exists some sequence of moments $\{M_{k}\}_{k=1}^{\infty}$ such that they uniquely determine a probability distribution whose cumulative distribution function is denoted by $F$. If $\text{ lim}_{N\rightarrow \infty}M_{k}(N)= M_{k} \text{ and } \text{lim}_{N\rightarrow \infty}\text{Var}[M_{k}(A_N)]= 0$ for every positive integer k, then the sequence $\{F_{A_N}\}_{N=1}^{\infty}$ converges in probability to the limiting spectral distribution of the ensemble, $F$.\end{thm}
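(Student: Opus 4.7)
The plan is to combine Chebyshev's inequality with a subsequence argument that feeds back into the deterministic Method of Moments (Theorem \ref{thm:thm11}). First I would show that the two hypotheses together imply that for each fixed $k$, $M_k(A_N) \to M_k$ in probability. Indeed, by Chebyshev's inequality,
\begin{equation}
\mathbb{P}\bigl(|M_k(A_N)-M_k(N)| > \varepsilon\bigr) \ \le \ \frac{\mathrm{Var}[M_k(A_N)]}{\varepsilon^2} \ \longrightarrow\ 0,
\end{equation}
and since $M_k(N)\to M_k$ by hypothesis, the triangle inequality gives $M_k(A_N)\to M_k$ in probability. The content is only that the random moments concentrate at their (converging) means; there is nothing matrix-theoretic in this step.

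Next I would upgrade this to an almost sure statement along a subsequence. Given any subsequence $\{N_j\}$, one can extract a further subsequence $\{N_{j_\ell}\}$ along which $M_1(A_{N_{j_\ell}}) \to M_1$ almost surely. Re-indexing and iterating, a standard Cantor diagonal extraction produces a single sub-subsequence $\{N_{j_\ell}\}$ such that $M_k(A_{N_{j_\ell}}) \to M_k$ almost surely, \emph{simultaneously} for every positive integer $k$. On the full-probability event where this holds, the empirical moments of $F_{A_{N_{j_\ell}}}$ converge to $\{M_k\}$, and since these moments uniquely determine $F$ (by Riesz's condition), Theorem \ref{thm:thm11} applies deterministically to conclude $F_{A_{N_{j_\ell}}} \to F$ weakly almost surely.

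With this in hand, convergence in probability of $F_{A_N}$ to $F$ follows by a routine subsequence-of-subsequence argument. Fix a continuity point $x$ of $F$; I want to show $F_{A_N}(x) \to F(x)$ in probability. If not, there exist $\varepsilon,\delta>0$ and a subsequence $\{N_j\}$ with $\mathbb{P}(|F_{A_{N_j}}(x)-F(x)|>\varepsilon) \ge \delta$. Apply the diagonalization above to this $\{N_j\}$ to get a further subsequence along which $F_{A_{N_{j_\ell}}} \to F$ weakly almost surely, hence $F_{A_{N_{j_\ell}}}(x) \to F(x)$ almost surely (since $x$ is a continuity point of $F$), contradicting the bound $\mathbb{P}(|F_{A_{N_{j_\ell}}}(x)-F(x)|>\varepsilon) \ge \delta$. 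Since every subsequence has a sub-subsequence converging in probability to the same limit, the full sequence converges in probability.

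The only slightly delicate point is the diagonal extraction: one must be careful that the full-probability event on which all countably many moment sequences converge is obtained as a countable intersection of full-probability events, so it itself has probability one. This is routine, and I expect it to be the only place where care is needed; the rest is a direct translation of Theorem \ref{thm:thm11} from a deterministic statement to a probabilistic one via Chebyshev.
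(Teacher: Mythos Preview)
Your proposal is correct, and there is essentially nothing to compare it against: the paper does not prove Theorem~\ref{thm:thm12} at all. It simply states that the result ``follows directly from the Method of Moments'' and points to \cite{B} for details. Your argument supplies exactly those details---Chebyshev to get $M_k(A_N)\to M_k$ in probability, a diagonal sub-subsequence extraction to get simultaneous almost sure convergence of all moments, then the deterministic Theorem~\ref{thm:thm11} plus the subsequence characterization of convergence in probability---and this is the standard route one finds in references like \cite{B}. In that sense your approach \emph{is} the paper's approach, just made explicit.
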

\subsubsection{Circuits.}  Using the Eigenvalue-Trace Lemma, we have\footnote{Although $a_{ij}$ refers to a matrix entry and $a_{L(i,j)}$ refers to the input sequence variable whose value describes that entry, it is convenient to use both notations interchangeably.}
\begin{align}\label{1.9}
\begin{split}
M_{k}(N)=\frac{1}{N^{\frac{k}{2}+1}}\sum_{1\leq i_1,\dots,i_k\leq N}\mathbb{E}[a_{L(i_1,i_2)}a_{L(i_2,i_3)}\cdot \cdot \cdot a_{L(i_k,i_1)}].
\end{split}
\end{align}
The above sum is taken over all combinations of positive integers $\{i_1,\dots,i_k\}$ at most $N$, where each distinct combination, or \textit{circuit}, is a function from indices to integer values:\footnote{Here the index values have been shifted down by one, so that the first index $i_1$ is mapped by  $\pi(0)$, and the last index $i_1$ is relabeled $\pi(k)$ under the constraint that $\pi(0)$ = $\pi(k)$.}
\begin{align}
\begin{split}
\pi:\{0,1,2,\dots,k\}\rightarrow \{1,2,\dots,N\} \text{ such that } \pi(0)\ = \ \pi(k).
\end{split}
\end{align}
We call an input variable index $L(\pi(i-1),\pi(i))$ an \textit{L-value}.  Each L-value must occur at least twice in a circuit to contribute to the sum, since the expected value of a product of independent random variables is the product of the expected values and the distribution $p$ is assumed to have mean 0.  An equivalence class of circuits is a partition of the set $\{1,2,\dots,k\}$, labeled by a \textit{word} of length $k$, where the first occurrence of each letter in the word is in alphabetical order.  If we let $w[i]$ for $1\le i\le k$ denote the $i\textsuperscript{th}$ entry of the word $w$, the equivalence class of circuits corresponding to $w$ is
\begin{align}
\begin{split}
\Pi(w) \ := \  \left\{\pi: w[i]=w[j] \Longleftrightarrow  L(\pi(i-1),\pi(i))=L(\pi(j-1),\pi(j)) \right\}.
\end{split}
\end{align}
The \textit{size} of $w$, or number of distinct letters, is denoted by $|w|$:
\begin{align}
\begin{split}
|w| \ := \  \#\{L(\pi(i-1),\pi(i)):1\le i \le k\}.
\end{split}
\end{align}
The positions of the letters in a word $i$, for $1\le i \le k$, along with the additional value $i = 0$, are called \textit{vertices}.\footnote{``Vertex'' and ``index'' are essentially synonymous in this paper.  Indices $\pi(i)$ that are functions of a generating vertex $i$ will simply be called ``generating indices'' and correspond to the degrees of freedom.}  A vertex is \textit{generating} if either $i = 0$ or $w[i]$ is the first occurrence of a letter in the word.  Otherwise, the vertex is \textit{non-generating}.  For example, if $w = ababcb$, then the generating vertices are $\{0,1,2,5\}$ and the non-generating vertices are $\{3,4,6\}$.
The number of generating vertices is equivalent to the maximum number of degrees of freedom one has in choosing a circuit that corresponds to that word, because once the indices $\pi(i)$ corresponding to generating vertices $i$ are chosen, the indices corresponding to the non-generating vertices are fixed by the fact that they have to satisfy matched L-values.  For example, consider the word $abab$ for the Toeplitz link function $L(i,j) = |i-j|$.  The word dictates the following system of equations:
\begin{align}
\begin{split}
|\pi(0)-\pi(1)|\ =\ |\pi(2)-\pi(3)| \text{ and } |\pi(1)-\pi(2)|\ =\ |\pi(3)-\pi(4)|.
\end{split}
\end{align}
We can choose the indices $\pi(0),\hspace{1mm} \pi(1), \text{ and } \pi(2)$ freely, since the vertices 0, 1, and 2 are generating, but then $\pi(3)$ is fixed by the matching constraints and $\pi(4)$ is defined to be equal to $\pi(0)$.  Since there are at most $N$ choices for each generating index corresponding to a generating vertex, and since there are $|w|+1$ generating vertices, the size of the equivalence class for word $w$ is at most
\begin{align}
\begin{split}
\#\Pi(w)\ = \ O(N^{|w|+1}).
\end{split}
\end{align}
Only \textit{pair-matched} words contribute, words in which every letter appears exactly twice, and odd moments are zero.  A sufficient condition for this is called Property B:\footnote{See \cite{B} for proof of this standard result.}
\begin{align}
\begin{split}
\Delta(L) \ := \  \text{sup}_N \text{sup}_{t \in \mathbb{Z}}\text{sup}_{1\le k \le N}\hspace{1mm}\#\{m: 1 \le m \le N, L(k,m)=t\}< \infty.
\end{split}
\end{align}
For a matrix satisfying Property B, its $\Delta(L)$ value, the maximum number of repetitions of the same random variable in any row or column, is finite.  Since only even moments with pair-matched words of length $2k$ contribute, the moments of the limiting spectral distribution can be written as
\begin{align}\label{1.21}
\begin{split}
M_{2k}\ = \ \text{lim}_{N\rightarrow \infty}M_{2k}(N)\ = \ \sum_{w:\hspace{1mm}w\text{ is pair-matched of length 2$k$}}\text{lim}_{N\rightarrow \infty}\frac{1}{N^{k+1}}\#\Pi(w).
\end{split}
\end{align}
Computing the limiting moments reduces to checking all possible pair-matched words, and for each word, finding the number circuits corresponding to that word.  Counting the number of circuits for a given word becomes equivalent to counting the number of integer solutions to a set of Diophantine, or integer-valued, equations.\footnote{In this paper we only consider matrix ensembles that satisfy Property B.}
Pair-matched words of length $2k$ can be classified as \textit{non-crossing partitions} or \textit{crossing partitions}.  Consider the set $\{1,2,\dots,2k\}$.  Arrange the elements on a circle sequentially.  Pick any pair partition of this set and draw an edge connecting every pair.  The partition is said to be non-crossing if none of the edges crosses another, and crossing otherwise.
The non-crossing partitions are in bijection with Catalan words (see for example \cite{AGZ}).  A pair-matched word of length $2k$ is called a \textit{Catalan word} if (1) there is at least one double letter, (2) if any double letter is deleted, the remaining word of length $2k-2$ is either empty or has a double letter, and (3) repeating the process in the previous step ultimately leads to an empty word.  For example, $aabbcc$ is a Catalan word, while $abcabc$ is not a Catalan word, as shown in Figure \ref{fig:figure1}.
\begin{figure}
\centering
\includegraphics[scale = 0.6]{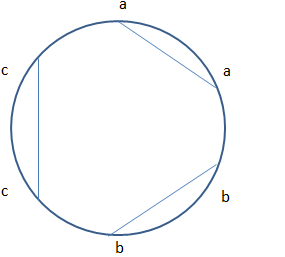}
\includegraphics[scale = 0.6]{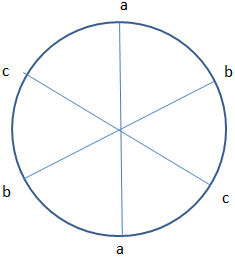}
\caption{\textit{aabbcc} on the left is a Catalan word, while \textit{abcabc} on the right is not a Catalan word. }\label{fig:figure1}
\end{figure}
The number of Catalan words of length $2k$ is given by the $2k\textsuperscript{th}$ \textit{Catalan number}
\begin{align}
\begin{split}
C_{2k} \ := \  \frac{1}{k+1}\binom{2k}{k}.
\end{split}
\end{align}
The $2k\textsuperscript{th}$  moment of the semicircle measure is exactly $C_{2k}$, with odd moments zero (see \cite{B}).  This is critical, because then one can prove that an ensemble of matrices has a semicircular limiting spectral distribution by showing that all Catalan words of length $2k$ contribute one to the $2k\textsuperscript{th}$ moment and all non-Catalan words contribute zero.
In this paper we investigate several types of convergence.
\begin{enumerate}
\item (Almost sure convergence) For each $k$, $M_{k}\left(A_N\right)\rightarrow M_{k}$ almost surely if
\begin{align}\label{eq:eq118}
\begin{split}
\text{Prob}\left(\{A_N\in \Omega_N : M_k\left(A_N\right)\rightarrow M_k\text{ as }N\rightarrow\infty\}\right)\ = \ 1.
\end{split}
\end{align}
\item (Convergence in probability) For each $k$, $M_k\left(A_N\right)\rightarrow M_k$ in probability if for all $\epsilon>0$,
\begin{align}\label{eq:eq119}
\begin{split}
\text{lim}_{N\rightarrow\infty}\text{Prob}\left(\{A_N\in\Omega_N:|M_{k}\left(A_N\right) - M_{k}|>\epsilon\}\right)\ = \ 0.
\end{split}
\end{align}
\item (Weak convergence) For each $k$, $M_{k}\left(A_N\right)\rightarrow M_{k}$ weakly if
\begin{align}
\begin{split}
\text{Prob}\left(M_k\left(A_N\right)\le x\right)\rightarrow\text{Prob}\left(M_k\le x\right)
\end{split}
\end{align}
as $N\rightarrow\infty$ for all $x$ at which $F_{M_k}(x)\ = \ \text{Prob}\left(M_k\le x\right)$ is continuous.
\end{enumerate}
The convergence is \textit{universal} if it is independent of $p$.
\subsection{Results}\label{sec:sec13}
Our main results concern the limiting spectral distributions of random matrices with a certain type of real-valued bivariate polynomial link function.  Using the Method of Moments, we prove the following.
\begin{thm}[Generalized Toeplitz and Hankel Matrices]\label{thm:thm14}
For fixed positive integers $\alpha$ and $\beta$, the generalized Toeplitz link function is defined as
\begin{align} L_{T,\alpha,\beta}(i,j) \ := \   \begin{cases}
       \alpha i-\beta j  & i \le j \\
       -\beta i + \alpha j & i > j,
     \end{cases}
\end{align}
while the generalized Hankel link function is defined as
\begin{align} L_{H,\alpha,\beta}(i,j) \ := \   \begin{cases}
       \alpha i+\beta j  & i \le j \\
       \beta i + \alpha j & i > j.
     \end{cases}
\end{align}
For both of these ensembles:
\begin{enumerate}
\item The fourth and sixth moments are functions of the parameters $\alpha$ and $\beta$.
\item For fixed $\alpha$ and $\beta$, as $N\rightarrow\infty$ the empirical spectral measures converge in probability and almost surely to a unique and universal limiting distribution.  The limiting distribution is non-semicircular, because for any $\alpha$ and $\beta$ the sixth moment does not agree with its respective Catalan number (see Lemma \ref{lem:lemma37}).
\item In the limit as $\alpha$ or $\beta$ tends to infinity, the limiting distributions converge to the semicircle distribution.\footnote{See \S\ref{sec:sec3} for the appropriate notion of convergence.} 
\end{enumerate}
\end{thm}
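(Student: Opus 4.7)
The plan is to apply the Method of Moments (Theorem \ref{thm:thm12}) to both ensembles. First observe that
\begin{equation*}
L_{T,\alpha,\beta}(i,j)\ =\ \alpha\min(i,j)-\beta\max(i,j),\qquad L_{H,\alpha,\beta}(i,j)\ =\ \alpha\min(i,j)+\beta\max(i,j),
\end{equation*}
so for any fixed row index $k$ and target value $t$, the equation $L(k,m)=t$ has at most one solution on each of the two linear branches. Hence both ensembles satisfy Property B with $\Delta(L)\le 2$; odd moments vanish, and by \eqref{1.21} the $2k$-th limiting moment equals $\sum_w \lim_{N\to\infty}\#\Pi(w)/N^{k+1}$, the sum ranging over pair-matched words $w$ of length $2k$.

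For each such word I would stratify the circuits in $\Pi(w)$ by the $2^{2k}$ orderings of the consecutive pairs $(\pi(i-1),\pi(i))$; on a fixed stratum the matching conditions become a system of $k$ linear equations in the $k+1$ generating indices with coefficients in $\{\pm\alpha,\pm\beta\}$. Rescaling $\pi(i)=Nt_i$ and letting $N\to\infty$ gives
\begin{equation*}
\#\Pi(w)\ =\ c_w(\alpha,\beta)\,N^{k+1}+O(N^k),
\end{equation*}
where $c_w(\alpha,\beta)$ is a sum over strata of Lebesgue volumes of polytopes in $[0,1]^{k+1}$ and is a rational function of $\alpha$ and $\beta$. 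Applying this to the three pair-matched words of length four and the fifteen of length six yields explicit formulas for the fourth and sixth moments, establishing part (1); Lemma \ref{lem:lemma37} then isolates the sixth-moment discrepancy from $C_3$.

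For part (2), the bound $c_w(\alpha,\beta)\le 1$ gives $M_{2k}(\alpha,\beta)\le (2k-1)!!$, satisfying Riesz's condition, so the moment sequence uniquely determines a distribution $F_{\alpha,\beta}$. Universality is automatic because each $c_w$ depends only on which $L$-values are paired and on the fact that $\text{Var}(p)=1$. For convergence in probability I would expand $\text{Var}[M_{2k}(A_N)]=\mathbb{E}[M_{2k}(A_N)^2]-M_{2k}(N)^2$ as a double sum over pairs of circuits of length $2k$: the disconnected contribution cancels the squared mean, and a degree-of-freedom count on the connected pairs yields $\text{Var}[M_{2k}(A_N)]=O(1/N^2)$. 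Chebyshev and Borel-Cantelli then give almost sure convergence along all $N$.

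For part (3), the key observation is that growing $\alpha$ or $\beta$ makes the link function effectively more injective: its image spreads out relative to $N$, so a matched $L$-value forces a tighter relation among indices. Concretely, on any sign stratum associated with a non-Catalan (crossing) word $w$, the linear system can be rewritten with each constraint of the form $\alpha(t_i-t_j)=\beta(t_k-t_l)$; as $\alpha$ or $\beta\to\infty$ this forces a coincidence among the $t$'s that strictly reduces the dimension of the polytope, so $c_w(\alpha,\beta)\to 0$. Catalan words, corresponding to non-crossing matchings, have $c_w\equiv 1$ independent of $\alpha$ and $\beta$, because their tree-like structure admits a sequence of innermost-pair reductions $L(x,y)=L(y,z)\Rightarrow x=z$ on the contributing strata, after which the remaining indices parametrize the full unit cube. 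Hence $M_{2k}(\alpha,\beta)\to C_k$ in the limit $\alpha\to\infty$ or $\beta\to\infty$, matching the semicircle moments. The main obstacle I anticipate is the sign-stratum analysis: there are up to $2^{2k}$ strata per word, and for each one must determine nonemptiness and track the $(\alpha,\beta)$-dependence of the corresponding polytope; this bookkeeping is the combinatorial heart of the argument.
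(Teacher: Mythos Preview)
Your outline matches the paper's strategy closely: moment method via Property~B, zone (sign-stratum) decomposition of circuits, and polytope volumes for the limiting moments; the explicit fourth and sixth moment computations and the convergence arguments in \S\ref{sec:sec5} proceed essentially as you describe. The one substantive divergence is Part~(3). You take the iterated limit $\lim_{\beta\to\infty}\lim_{N\to\infty}$ and argue that each non-Catalan polytope volume $c_w(\alpha,\beta)\to 0$; the paper instead couples $\beta=f(N)\to\infty$ with $N$ and bounds the number of $L$-matches for a fixed entry by $2\lceil Ng/f(N)\rceil$ with $g=\gcd(\alpha,f(N))$, which directly costs a degree of freedom at any non-adjacent matched pair (Lemma~\ref{lem:lemma33}). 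Your route is arguably more natural given the statement, but the step ``$\alpha(t_i-t_j)=\beta(t_k-t_l)$ forces a coincidence that strictly reduces the dimension for crossing words'' is not automatic: a single such constraint, when used to solve for an index on the $\beta$-side, gives $t_l=t_k-(\alpha/\beta)(t_i-t_j)\to t_k\in[0,1]$ with no loss. What you actually need is that somewhere in the chain of substitutions a non-generating index must be solved for on the $\alpha$-side, picking up a $\beta/\alpha$ factor that forces a collapse among generating indices; this is precisely the bookkeeping you flag at the end, and the paper's matching-count argument sidesteps it entirely. One smaller point: your innermost-pair reduction $L(x,y)=L(y,z)\Rightarrow x=z$ for Catalan words relies on the adjacent pair sitting in \emph{opposite} zones (Lemma~\ref{lem:lemma21}); in the same zone the equation reads $\alpha x\mp\beta y=\alpha y\mp\beta z$, which does not force $x=z$, so the restriction to ``contributing strata'' is doing real work there.
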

As in other related ensembles, it is very difficult to obtain closed-form expressions for the general moments of the limiting spectral distribution.  In the following theorem, however, we successfully identify the limiting distribution for other polynomial link functions.
\begin{thm}[Polynomial Toeplitz and Hankel Matrices]\label{thm:thm15}
Let $p_1(x) = a_mx^m+a_{m-1}x^{m-1}+\cdots+a_0$ and $p_2(x) = b_nx^n+b_{n-1}x^{n-1}+\cdots+b_0$ be polynomials with integer coefficients and $m\ne n$.  The link function for polynomial Toeplitz matrices is 
\begin{align}L_{PT}(i,j) \ := \  \begin{cases}
p_1(i) - p_2(j) & i \le j\\
-p_2(i) + p_1(j) & i > j,
\end{cases}
\end{align}
while the link function for polynomial Hankel matrices is
\begin{align}L_{PH}(i,j) \ := \  \begin{cases}
p_1(i) + p_2(j) & i \le j\\
p_2(i) + p_1(j) & i > j.
\end{cases}
\end{align}
For both of these ensembles, as $N\rightarrow\infty$ the empirical spectral measures converge in probability and almost surely to a universal semicircle distribution.\end{thm}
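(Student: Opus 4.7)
The plan is to apply the Moment Convergence Theorem for Random Matrices (Theorem~\ref{thm:thm12}) and show that for every positive integer $k$, the limiting even moment satisfies $M_{2k} = C_{2k}$, the $2k$-th Catalan number, while the variance $\mathrm{Var}[M_{2k}(A_N)]$ tends to zero quickly. Because the Catalan numbers uniquely determine the semicircle distribution (see footnote~\ref{foot:foot3}) and odd moments vanish by the pair-matching requirement, this gives convergence in probability, and a bound of the form $\mathrm{Var}[M_{2k}(A_N)] = O(N^{-2})$ then upgrades the convergence to almost sure via Chebyshev's inequality and Borel--Cantelli.

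Starting from Eq.~\eqref{1.21}, I would split the sum over pair-matched words $w$ of length $2k$ into Catalan (non-crossing) and non-Catalan (crossing) contributions and prove that each Catalan word contributes exactly $1$ and each crossing word contributes $0$ to the limit. Both link functions admit the unified form $L(i,j) = p_1(\min(i,j)) \pm p_2(\max(i,j))$, so the matching equation for an adjacent double letter $w[i] = w[i+1]$ involving indices $a = \pi(i-1)$, $b = \pi(i)$, $c = \pi(i+1)$ reduces, after splitting into the four orderings of $a,b,c$, either to $p_1(c) = p_1(a)$ or $p_2(c) = p_2(a)$ (when $a$ and $c$ lie on the same side of $b$), or to a polynomial equation in $c$ of degree $\max(m,n)$ (when $b$ separates $a$ from $c$). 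In the first family, the degenerate root $c = a$ supplies the full factor of $N$ as $a$ varies, and all other roots are bounded in number and give lower-order contributions once combined with the rest of the word. After resolving the adjacent double letter, I would remove it and recurse on the shorter Catalan word, closing the induction at $\#\Pi(w) = N^{k+1} + o(N^{k+1})$, which gives contribution exactly $1$.

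The core of the argument is the vanishing of crossing-word contributions, where I would exploit the hypothesis $m \ne n$ to show that the combined Diophantine system is over-determined. In a crossing pair-matching there exist two interleaved pairs of positions whose matching equations share a block of indices in a non-separable fashion. Writing each equation in the form $p_1(\cdot) \pm p_2(\cdot) = p_1(\cdot) \pm p_2(\cdot)$ and isolating leading-degree terms, I would argue that no combination of these relations can simultaneously eliminate the degree-$m$ and degree-$n$ contributions unless the generating indices satisfy a nontrivial polynomial identity; this is precisely the step that fails for the classical Toeplitz and Hankel ensembles, where the equal degrees $m = n = 1$ permit leading parts to telescope and allow crossing pairings to survive. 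A nontrivial polynomial equation in $r$ variables has $O(N^{r-1})$ integer solutions in $[1,N]^r$, so $\#\Pi(w) = o(N^{k+1})$ for every crossing $w$, and its contribution to $M_{2k}$ vanishes. An analogous cross-matching analysis for pairs of concatenated circuits bounds the covariance sum and yields the variance estimate $\mathrm{Var}[M_{2k}(A_N)] = O(N^{-2})$.

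The main obstacle is the uniform polynomial-rigidity claim used for crossing words: one must verify, for every pair-matched crossing word $w$ and every ordering configuration of a circuit, that the combined matching equations produce a genuine algebraic relation among the generating indices whose leading-degree part does not vanish identically. This is a finite but intricate case analysis driven by the hypothesis $m \ne n$, and it is precisely what distinguishes the present ensembles from the generalized Toeplitz and Hankel cases treated in Theorem~\ref{thm:thm14}, where crossing partitions survive and produce the non-semicircular moments detected there.
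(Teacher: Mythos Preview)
Your overall framework---Method of Moments, split into Catalan and non-Catalan words, Catalan words contribute $1$ via adjacent lifting, variance $O(N^{-2})$ for almost-sure convergence---matches the paper's, and your treatment of Catalan words is essentially Lemma~\ref{lem:lemma22}.

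Where you diverge is the mechanism for killing crossing words. You frame it as \emph{algebraic over-determination}: combine the matching equations of two interleaved pairs, argue that because $m\ne n$ the leading-degree pieces cannot cancel, deduce a nontrivial polynomial identity among the generating indices, and invoke an $O(N^{r-1})$ bound on its solution set. The paper does something different and more direct. It reduces to a fully crossed word, picks a \emph{single} non-adjacent matched pair whose four indices include three generating ones, and writes the corresponding L-value equation in the form
\[
p_1(\pi(0)) - p_1(\pi(2)) \;=\; p_2(\pi(1)) - p_2(\pi(3)).
\]
The point is purely quantitative: the right side is $O(N^n)$ in absolute value, while the left side ranges over values of size up to $O(N^m)$. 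Since $m>n$, the constraint $|p_1(\pi(0))-p_1(\pi(2))|=O(N^n)$ confines $(\pi(0),\pi(2))$ to a set of size $o(N^2)$ (made precise by an integral estimate and Lemma~\ref{lem:lemma41}). With $\pi(1)$ free and $\pi(3)$ then determined up to $O(1)$ choices, the pair carries fewer than three degrees of freedom, and the whole word fewer than $k+1$.

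This matters because your ``extra polynomial identity among the generating indices'' is not actually what happens: there is no additional algebraic equation, only a \emph{range} obstruction that makes the existing equation have too few integer solutions in $[1,N]$. Your Schwartz--Zippel style bound $O(N^{r-1})$ would apply if you could exhibit a genuinely new polynomial relation among the $k+1$ generating indices, but your justification (``leading-degree parts cannot telescope when $m\ne n$'') is a heuristic, and carrying out the elimination rigorously across all crossing words and all zone configurations is exactly the ``finite but intricate case analysis'' you flag as unresolved. The paper sidesteps that case analysis entirely by working with one equation and a size argument. Your route may be salvageable, but as written it has a real gap at the step where you pass from ``$m\ne n$'' to ``nontrivial relation among generating indices,'' and the paper's range-mismatch argument is both simpler and complete.
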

The rest of the paper is organized as follows.  We first establish some basic results about a general class of bivariate polynomial link functions in \S\ref{sec:sec2}.  We then analyze in detail the even moments of the generalized Toeplitz and Hankel matrices in \S\ref{sec:sec3} and prove the limiting distribution of polynomial Toeplitz and Hankel matrices in \S\ref{sec:sec4}.  We prove our convergence claims in \S\ref{sec:sec5}.  Future work is discussed in \S\ref{sec:sec6}, and numerical methods are discussed in Appendix \ref{sec:sec7}.  It is worth noting that numerical investigations played an important role in our analysis, as they highlighted key features of the combinatorics.
\section{Bivariate Polynomial Link Functions}\label{sec:sec2}
In this section we introduce a general class of real-valued bivariate polynomial link functions and prove several results that describe contributions to the moments.
Consider the link function
\begin{align}\label{2.1} L(i,j)\ = \ \begin{cases}
p_1(i) \pm p_2(j) & i \le j \\
\pm p_2(i) + p_1(j) & i > j,
\end{cases} \end{align}
where $p_1(x) = a_mx^m+a_{m-1}x^{m-1}+\cdots+a_0$ and $ p_2(x) = b_nx^n+b_{n-1}x^{n-1}+\cdots+b_0$ are polynomials\footnote{Assume the polynomials are nonconstant to avoid a violation of Property B.} with integer coefficients\footnote{We do not consider irrational coefficients.  Consider, for example, $p_1(x)=\sqrt{2}x$ and $p_2(x)=x$.  Because every resulting L-value in the upper triangle of the matrix is unique, matrices with this link function are equivalent to real symmetric matrices.  We also do not consider rational coefficients that are not integers.  Because matrix structure is invariant under multiplying the link function by a constant, just multiply the link function by the least common multiple of the denominators of the rational coefficients to get all integers.}.  It is easy to see that this generalizes the real symmetric Toeplitz matrices.  The link function splits any matrix into two zones, where Zone 1 is the upper triangle including the main diagonal and Zone 2 is the lower triangle.\footnote{We could have defined the zones to exclude the main diagonal, as the values of the main diagonal do not affect the limiting distribution of the eigenvalues.  Likewise, we could have set the main diagonal to be all zeros.}  For example, if two matched entries $a_{L(i_1,i_2)} = a_{L(i_3,i_4)}$ are such that $a_{i_1i_2},\hspace{1mm}a_{i_3i_4}\in\text{Zone 1}$, their L-values must satisfy
\begin{align}
\begin{split}
p_1\left(\pi(0)\right)\pm p_2\left(\pi(1)\right)\ = \ p_1\left(\pi(2)\right) \pm p_2\left(\pi(3)\right).
\end{split}
\end{align}
To compute moments, we count the number of contributing circuits for all pair-matched words.  For the letters in these words, we define an \textit{adjacent pair} as a matching between matrix entries that share one index, as in $a_{L(i_1,i_2)} = a_{L(i_2,i_3)}$. The following reduces the number of contributing circuits.
\begin{lem}[Adjacent Pairs]\label{lem:lemma21} Let L(i,j) be a bivariate polynomial link function where $p_1(x) = a_mx^m+a_{m-1}x^{m-1}+\cdots+a_0$ and $p_2(x) = b_nx^n+b_{n-1}x^{n-1}+\cdots+b_0$  are polynomials with integer coefficients.  Then in any contributing word of length 2k, adjacent pairs must be in opposite zones when $p_1(x)\ne p_2(x)$.\end{lem}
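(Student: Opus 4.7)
The plan is to expand the matching equation $L(\pi(j-1),\pi(j))=L(\pi(j),\pi(j+1))$ in each of the four possible zone configurations of the two entries of an adjacent pair, and then to exploit the hypothesis $p_1\ne p_2$ to rule out the same-zone configurations.

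First I would enumerate the four cases directly from the definition of $L$ in (2.1). In the two opposite-zone configurations, where $\pi(j)$ is a local extremum of the triple $\pi(j-1),\pi(j),\pi(j+1)$, the shared index $\pi(j)$ enters both sides of the equation through the \emph{same} polynomial (either $p_2$ or $p_1$, depending on whether $\pi(j)$ is the larger or smaller of its two neighbors in each half), and that term cancels. What remains is either $p_1(\pi(j-1))=p_1(\pi(j+1))$ or $p_2(\pi(j-1))=p_2(\pi(j+1))$. Because $p_1$ and $p_2$ are nonconstant integer polynomials, each is eventually strictly monotone on $\mathbb{Z}_{>0}$ and hence injective outside an $O(1)$ exceptional set, so the constraint reduces to the benign equality $\pi(j-1)=\pi(j+1)$. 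This is exactly the type of equality that pair-matched Catalan words rely on to attain the maximal count $N^{|w|+1}$ of circuits.

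In the two same-zone configurations there is no cancellation and the constraint takes the form
\[p_1(\pi(j-1))-p_1(\pi(j)) \ = \ \pm\bigl(p_2(\pi(j+1))-p_2(\pi(j))\bigr),\]
together with a monotonicity constraint $\pi(j-1)\le\pi(j)\le\pi(j+1)$ (or its reverse). I would argue that for the word to contribute at the maximal rate every \emph{other} adjacent pair must be opposite-zone, and that the propagation of their equalities through the nested double-letter decomposition of a Catalan word forces $\pi(j+1)=\pi(j-1)$ at the site of the same-zone pair as well. Substituting this into the displayed equation yields $(p_1\mp p_2)(\pi(j-1))=(p_1\mp p_2)(\pi(j))$. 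Since $p_1\ne p_2$ and, in the sign conventions relevant here, also $p_1\ne -p_2$, the polynomial $p_1\mp p_2$ is nonconstant and hence injective on large positive integers, forcing $\pi(j-1)=\pi(j)$. Combined with the monotonicity, $\pi(j-1)=\pi(j)=\pi(j+1)$: three generating vertices collapse to a single degree of freedom, so the associated circuit count drops to $O(N^{|w|})=o(N^{|w|+1})$ and the word cannot contribute to the $2k$-th moment.

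The main obstacle will be the inductive propagation step that carries the opposite-zone equalities through the Catalan word's nested structure down to the site of the same-zone pair; this is natural via the standard removal of an innermost double letter, but requires care to avoid circularity when several same-zone pairs are hypothesized simultaneously. A secondary subtlety is excluding the degenerate case in which $p_1\mp p_2$ reduces to a constant, which corresponds to the excluded identity $p_1=p_2$ or to a sign-flipped identity $p_1=-p_2$ that the paper's conventions on $p_1,p_2$ do not permit; outside this, the injectivity step closes the argument and the rank drop in the Diophantine system of matching constraints yields the claimed $o(N^{|w|+1})$ bound, completing the proof of the lemma.
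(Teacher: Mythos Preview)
Your same-zone analysis has a real gap. The propagation step that is supposed to deliver $\pi(j+1)=\pi(j-1)$ leans on two things you are not entitled to: the ``nested double-letter decomposition of a Catalan word,'' whereas the lemma is stated for \emph{any} contributing word (for instance $aabcbc$ carries an adjacent pair but is not Catalan, and its crossing $bcbc$ constraints do not force $\pi(0)=\pi(2)$); and the assumption that every \emph{other} adjacent pair is already opposite-zone, which is the very conclusion under proof. Without $\pi(j+1)=\pi(j-1)$ your substitution $(p_1\mp p_2)(\pi(j-1))=(p_1\mp p_2)(\pi(j))$ never materializes, and the claimed collapse of three vertices does not follow. (Incidentally, at most two of $\pi(j-1),\pi(j),\pi(j+1)$ are generating, since the second letter of the double pair is by definition a repeat.)

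The paper's route is a direct degree-of-freedom count that requires no global information about the word and no induction over adjacent pairs. Cycle so the adjacent pair sits at positions $1,2$; then $\pi(0),\pi(1)$ are generating and $\pi(2)$ is not. Fix the $k$ generating indices \emph{other than} $\pi(1)$ (cost $\le N^{k}$). The remaining $k-1$ matching equations involve only $\pi(2),\dots,\pi(2k)=\pi(0)$, and together with the $k$ already-fixed values they determine the non-generating indices---in particular $\pi(2)$---up to $\Delta(L)^{k}$ choices. The same-zone equation then reads $p_1(\pi(0))\pm p_2(\pi(1))=p_1(\pi(1))\pm p_2(\pi(2))$, which with $\pi(0),\pi(2)$ fixed is a nontrivial polynomial equation in the single unknown $\pi(1)$ (here is where $p_1\ne p_2$ is used), leaving $O(1)$ solutions. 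Hence $\#\Pi(w)=O(N^{k})=o(N^{k+1})$, and the same-zone configuration never contributes.
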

\begin{proof}
Any adjacent pair has the form $a_{L(i_1,i_2)} = a_{L(i_2,i_3)}$.  For example, assume that $a_{i_1i_2}$, $a_{i_2i_3}\in\text{ Zone 1}$ and the coefficient of $p_2(x)$ is positive.  The corresponding L-value equation has the form
\begin{equation}
p_1\left(\pi(0)\right) + p_2\left(\pi(1)\right)\ = \ p_1\left(\pi(1)\right) + p_2\left(\pi(2)\right).
\end{equation}
Choose $a_{i_1i_2}$ to correspond to the first letter in the word from which the adjacent pair is chosen.\footnote{For any word, we are free to cycle the letters in the word without changing the underlying configuration, since the corresponding pair partition is invariant under a rotation.}  Then $\pi(0)$ and $\pi(1)$ are generating indices.  Choose $k$ of the generating indices, leaving out $\pi(1)$.  This occurs with at most $k$ degrees of freedom.  Then the non-generating indices are fixed by the other L-value equations derived from the word, including $\pi(2)$, with a total of $\Delta\left(L\right)^k$ choices.  Now both $\pi(0)$ and $\pi(2)$ are chosen, and there remains a finite number of choices independent of $N$ for $\pi(1)$.  This constitutes a loss in degrees of freedom, leaving at most $O\left(N^k\right)$ solutions and a contribution of zero to the moment, since we divide by $N^{k+1}$.  A similar argument holds if both entries are in Zone 2, and it also applies if the coefficient of $p_2(x)$ is negative.
\end{proof}
Recall that words can be categorized as Catalan or non-Catalan.  The following describes the contribution of Catalan words to even moments.
\begin{lem}[Adjacent Lifting]\label{lem:lemma22} Let L(i,j) be a bivariate polynomial link function where $p_1(x) = a_mx^m+a_{m-1}x^{m-1}+\cdots+a_0$ and $p_2(x) = b_nx^n+b_{n-1}x^{n-1}+\cdots+b_0$ are polynomials with integer coefficients that are monotonic on $\mathbb{N}$.  Then any Catalan word of length $2k$ for matrices with such a link function contributes one to the $2k\textsuperscript{th}$ moment when $p_1(x)\ne p_2(x)$.\end{lem}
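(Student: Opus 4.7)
The plan is to induct on $k$, mirroring the adjacent-lifting argument that yields the semicircle law in the real symmetric case. For the base case $k=1$, the unique Catalan word is $aa$. Since the link function in \eqref{2.1} is symmetric in its two arguments (one checks directly that exchanging $i$ and $j$ swaps the two zone formulas while preserving the value of $L$), the matching equation $L(\pi(0),\pi(1))=L(\pi(1),\pi(0))$ is automatically satisfied for every $(\pi(0),\pi(1))\in\{1,\dots,N\}^2$. This produces $N^2$ contributing circuits and hence contribution $N^2/N^{k+1}=1$.

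For the inductive step, let $w$ be a Catalan word of length $2k$. By definition of a Catalan word, $w$ contains an adjacent double letter; fix one at positions $j,j+1$ for some $1\le j\le 2k-1$. Lemma \ref{lem:lemma21} forces its two occurrences into opposite zones, so the corresponding L-value equation takes one of the forms
\[ p_1(\pi(j-1)) \pm p_2(\pi(j)) \ = \ \pm p_2(\pi(j)) + p_1(\pi(j+1)), \]
\[ \pm p_2(\pi(j-1)) + p_1(\pi(j)) \ = \ p_1(\pi(j)) \pm p_2(\pi(j+1)). \]
In either case the shared middle term cancels, leaving $p_s(\pi(j-1))=p_s(\pi(j+1))$ for some $s\in\{1,2\}$. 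Monotonicity of $p_s$ on $\mathbb{N}$ then forces $\pi(j-1)=\pi(j+1)$.

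Delete this adjacent pair to obtain a Catalan word $w'$ of length $2(k-1)$. The identification $\pi(j-1)=\pi(j+1)$ merges its two bounding generating vertices into one, while $\pi(j)$, which appeared only in the removed equation, is freed and contributes an independent factor of $N$. Applying the inductive hypothesis to $w'$ yields $N^k$ circuits to leading order, and multiplying by $N$ gives $N^{k+1}$ circuits for $w$; dividing by $N^{k+1}$ produces the desired contribution of $1$. The main obstacle is verifying that $w'$ really does behave as an independent instance of the inductive hypothesis: one must check that the remaining Diophantine equations of $w$ coincide with those of $w'$ so that $\pi(j)$ is genuinely unconstrained, and that collapsing $\pi(j-1)$ with $\pi(j+1)$ does not disrupt the zone pattern used by the remaining equations. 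Both facts reduce to the observation that zone membership depends only on comparisons between the surviving generating indices, which are untouched by the contraction, so the induction goes through.
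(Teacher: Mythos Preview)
Your proof is correct and follows essentially the same inductive adjacent-lifting strategy as the paper's. The only notable differences are cosmetic: you take $k=1$ (the word $aa$, dispatched via the symmetry $L(i,j)=L(j,i)$) as the base case rather than the paper's $k=2$ (the word $aabb$, handled by an integration over the unit cube), and you make explicit the role of monotonicity in passing from $p_s(\pi(j-1))=p_s(\pi(j+1))$ to $\pi(j-1)=\pi(j+1)$, which the paper asserts without comment.
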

\begin{proof}
We begin by counting the number of circuits for the word \textit{aabb}, the only Catalan word of length 4 up to rotation.  Adjacent pairs must be in opposite zones, by Lemma \ref{lem:lemma21}, giving four sets of inequalities relating the generating indices $\pi(0)$, $\pi(1)$, and $\pi(3)$.  Following standard calculations outlined in \cite{B}, change variables to $v_x = \frac{\pi(x)}{N}$, where $v_x\in\{\frac{1}{N},\dots,\frac{N}{N}\}$.  In the limit of large $N$, integrating over the region specified by the inequalities shows that the structure \textit{aabb} contributes one to the fourth moment:
\begin{align}
\begin{split}
 \int_0^1\int_0^1\int_0^1dv_0dv_1dv_3\ = \ 1.
\end{split}
\end{align}
For higher moments, any non-crossing pair partition must have at least one adjacent pair of the form $a_{L(i_1,i_2)}=a_{L(i_2,i_3)}$, by the definition of a Catalan word.  Since adjacent pairs must be located in opposite zones, any such adjacent pair must require $\pi(0) = \pi(2)$.  Since there are two sets of zones for the pair, the remaining index is bound either by $\pi(1)\le \pi(0)$ or $\pi(1)>\pi(0)$, leaving $\pi(1)$ as a free index.  ``Lift'' this pair by setting $\pi(0) = \pi(2)$ and relabeling the remaining indices appropriately.  Since there are now $2k-2$ indices left, the remaining structure is a Catalan word for the $(2k-2)\textsuperscript{th}$ moment, and the contribution can be computed with $\pi(1)$ as an extra degree of freedom.   Using this process, any non-crossing pair partition can be reduced to the fourth moment structure, and since that structure contributes one, any other Catalan word of length $2k$ contributes one to the $2k\textsuperscript{th}$ moment. \end{proof}
\begin{lem}[Zeroth, Second, and Odd Moments]\label{ref:lemma23} Let L(i,j) be a bivariate polynomial link function where $p_1(x) = a_mx^m+a_{m-1}x^{m-1}+\cdots+a_0$ and $p_2(x) = b_nx^n+b_{n-1}x^{n-1}+\cdots+b_0$  are polynomials with integer coefficients.  Then the zeroth moment is 1, the second moment is 1, and odd moments are zero.\end{lem}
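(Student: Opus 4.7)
The plan is to treat the three assertions separately using only the moment formula \eqref{1.9} and the fact that the random variables $a_i$ are i.i.d. with mean $0$ and variance $1$. No combinatorial work beyond degree-of-freedom counting is needed.

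For the zeroth moment, I would just observe that $M_{0}(A_N) = \frac{1}{N}\sum_{i=1}^N 1 = 1$ directly from \eqref{1.6}, so $M_0 = \lim_{N\to\infty} M_0(N) = 1$. For the second moment, pair-matched words of length $2$ consist only of $w = aa$. The induced constraint is $L(\pi(0),\pi(1)) = L(\pi(1),\pi(2))$ together with $\pi(2) = \pi(0)$. Here I would check directly from the piecewise definition \eqref{2.1} that $L(i,j) = L(j,i)$: for $i \le j$, we have $L(i,j) = p_1(i) \pm p_2(j)$, while $L(j,i)$ falls into the other case of the definition (since now the first argument is the larger one) and equals $\pm p_2(j) + p_1(i)$, which matches. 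Hence the constraint is automatically satisfied, so there are exactly $N^2$ circuits in $\Pi(aa)$, and
\begin{equation*}
M_2 \; = \; \lim_{N\to\infty} \frac{1}{N^{2}} \cdot N^{2} \cdot \mathbb{E}[a_1^2] \; = \; 1,
\end{equation*}
using the normalization $\mathrm{Var}(a_1) = 1$.

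For odd $k$, I would argue via loss of degrees of freedom. Because the input sequence has mean zero and the entries are independent, any circuit $\pi$ in which some L-value $L(\pi(i-1),\pi(i))$ appears exactly once contributes $0$ to \eqref{1.9}. Hence nontrivial contributions require every L-value to appear at least twice, so the associated word $w$ satisfies $|w| \le k/2$. When $k$ is odd this forces $|w| \le (k-1)/2$, so the number of generating vertices is at most $|w| + 1 \le (k+1)/2$. Since $\#\Pi(w) = O(N^{|w|+1})$ and the normalization in \eqref{1.9} is $N^{-(k/2+1)}$, the total contribution is bounded by
\begin{equation*}
O\!\left( N^{(k+1)/2 \;-\; (k/2+1)} \right) \; = \; O(N^{-1/2}) \;\longrightarrow\; 0.
\end{equation*}
Since there are only finitely many words of length $k$, summing over words preserves the bound and yields $M_k = 0$ for odd $k$.

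There is no serious obstacle here: all three statements are immediate consequences of the moment formula together with the symmetry $L(i,j) = L(j,i)$ of the link function. The only small point that requires a direct verification is the symmetry of $L$, since \eqref{2.1} is defined piecewise; this is easily checked by observing that swapping $i$ and $j$ interchanges the two cases and leaves the expression unchanged.
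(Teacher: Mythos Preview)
Your proposal is correct and follows essentially the same line as the paper. The paper simply defers the zeroth and second moments to standard calculations in \cite{HM}, whereas you spell them out directly (including the symmetry $L(i,j)=L(j,i)$); for odd moments the paper verifies Property~B by noting that for fixed $i$ the map $j\mapsto L(i,j)$ is a nonconstant polynomial and hence has finite preimages, and then invokes the standard consequence that odd moments vanish. Your degree-of-freedom argument is exactly that consequence made explicit. The one point worth tightening is that your bound $\#\Pi(w)=O(N^{|w|+1})$ already presupposes $\Delta(L)<\infty$: without it, fixing the generating indices need not pin down the non-generating ones to finitely many choices. That verification is a single line from the polynomial form of $L$, and is in fact the entire content of the paper's proof for the odd case.
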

\begin{proof}
Standard calculations in \cite{HM} apply for the zeroth and second moments.  Odd moments are zero because the link function satisfies Property B.  For any fixed $i$ or $j$, $L(i,j)$ is a polynomial in either $j$ or $i$ and so takes on any given value only finitely many times.\footnote{In fact, this is true for any bivariate polynomial $p(x,y)=a_nx^ny^n+a_{n-1}x^{n-1}y^{n-1}+\cdots+a_0.$}
\end{proof}
\section{Generalized Toeplitz and Hankel Matrices}\label{sec:sec3}  In this section we work towards proving Theorem \ref{thm:thm14}, in which we generalize the Toeplitz and Hankel link functions by changing the slope of the lines along which matrix entries are held constant.  In \S\ref{sec:sec31} we examine the generalized Toeplitz matrices and in \S\ref{sec:sec32} we analyze the generalized Hankel matrices.   
\subsection{Generalized Toeplitz Matrices}\label{sec:sec31} Recall that for fixed positive integers $\alpha$ and $\beta$, the generalized Toeplitz link function is\footnote{Note that this link function is of the form Eq. \eqref{2.1}.}
\begin{align} L_{T,\alpha,\beta}(i,j) \ := \   \begin{cases}
       \alpha i-\beta j  & i \le j \\
       -\beta i + \alpha j & i > j.
     \end{cases}
\end{align}
A matrix with $\alpha = \beta$ reduces to the original Toeplitz, while a $5\times5$ matrix with $\alpha = 2$ and $\beta = 1$ would have the structure
\begin{align} A_5\ = \ \left( \begin{array}{ccccc}
a_{1} & a_{0}& a_{-1}& a_{-2} & a_{-3}\\
a_{0} & a_{2}& a_{1}& a_{0} & a_{-1}\\
a_{-1} & a_{1}& a_{3}& a_{2} & a_{1}\\
a_{-2}& a_{0} & a_{2} & a_{4} & a_{3} \\
a_{-3} & a_{-1}& a_{1}& a_{3} & a_{5} \end{array} \right). \end{align}
\begin{figure}
\centering
\includegraphics[scale = 0.3]{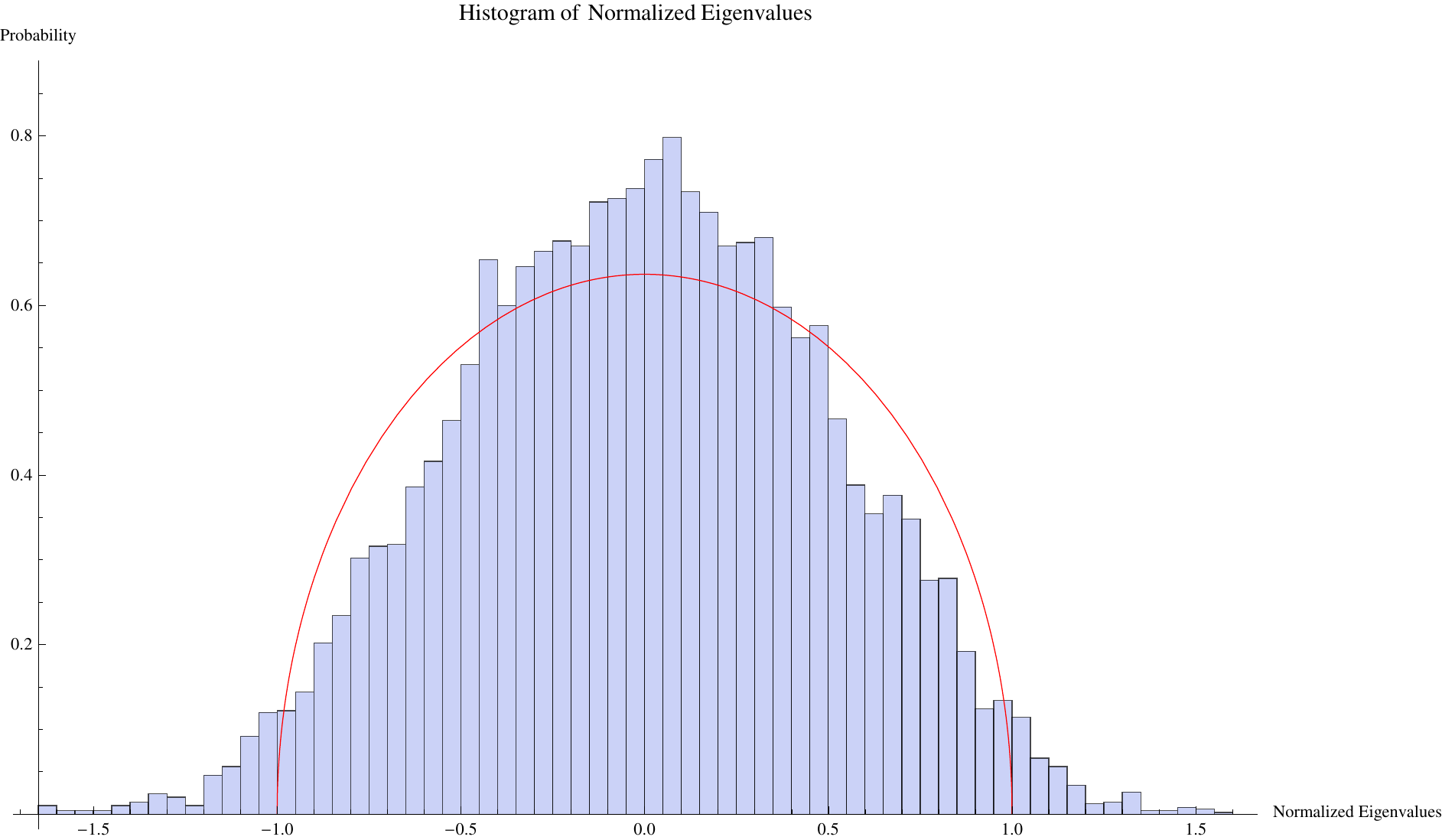}
\includegraphics[scale = 0.25]{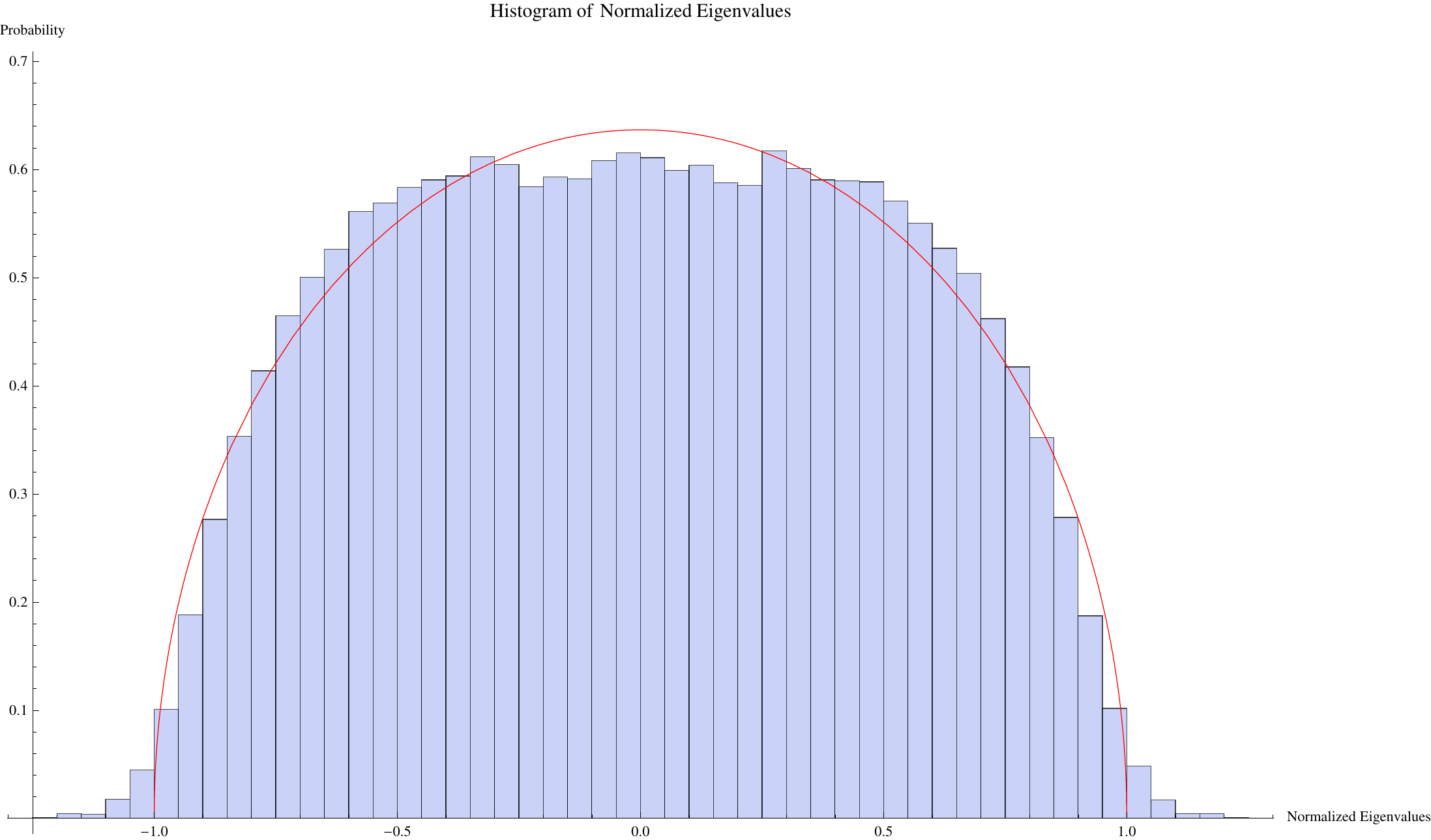}
\includegraphics[scale = 0.3]{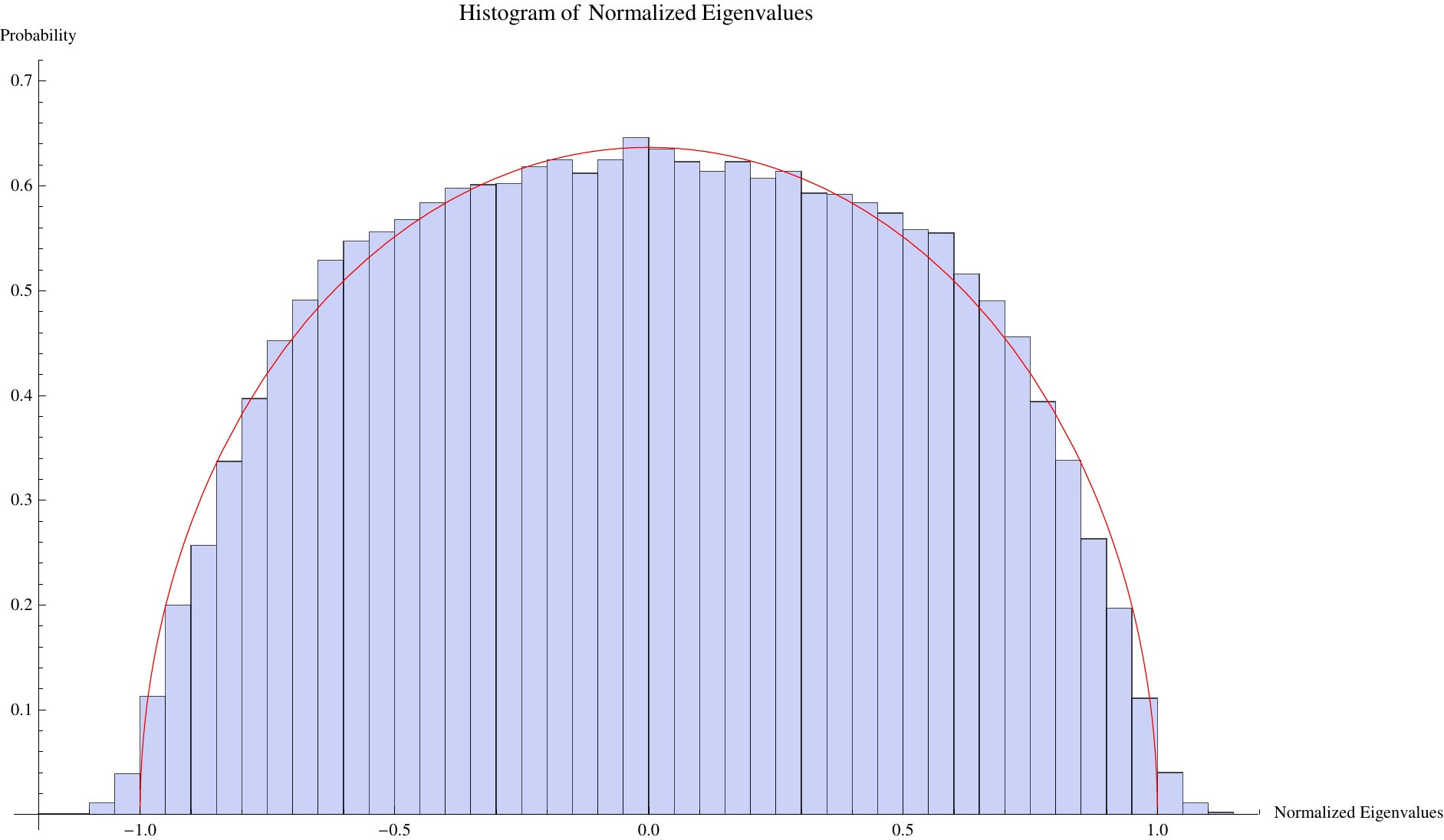}
\includegraphics[scale = 0.3]{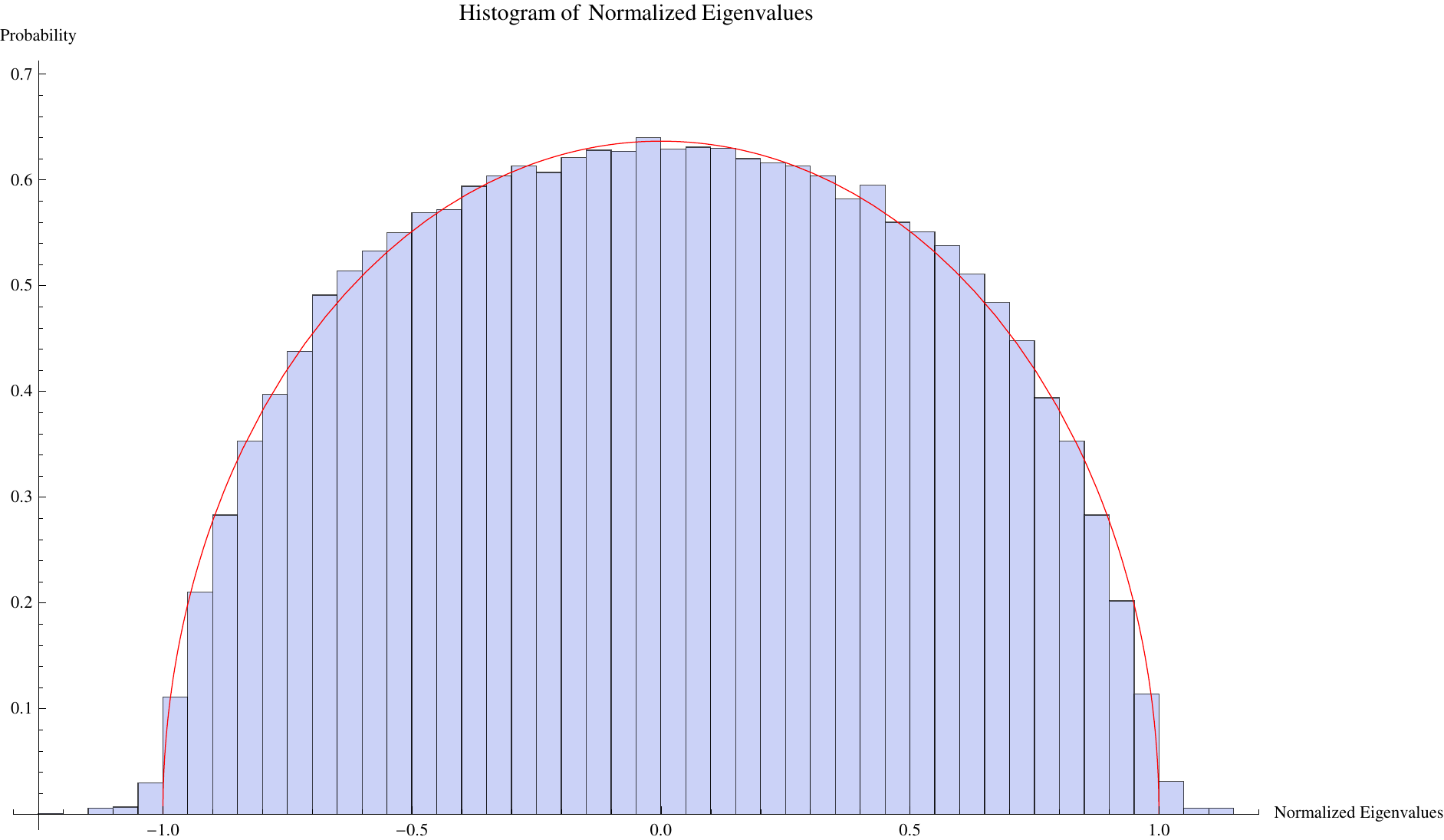}
\caption{Histograms of numerical eigenvalues of 100 generalized Toeplitz matrices of size $1200\times1200$.  Each has $\alpha = 1$.  From the upper left, going from left to right, $\beta$ is equal to 1, 2, 3, and 4.  The red curve is the semicircle distribution for an eigenvalue normalization of $2\sqrt{N}$ (see foonote \ref{foot:foot3}).  With increasing values of $\beta$, the distribution becomes more semicircular.}\label{fig:figure2}
\end{figure}
\subsubsection{Fourth Moment}
The pair-matched words of length four are \textit{aabb}, \textit{abba}, and \textit{abab}.  By Lemma \ref{lem:lemma22}, \textit{aabb} and \textit{abba} each contribute one to the fourth moment.\footnote{This holds for $\alpha = \beta$ and $\alpha \ne\beta.$  See \cite{HM} for the case $\alpha = \beta$, which corresponds to original Toeplitz matrices.}  The word \textit{abab} only contributes to the moment when $\alpha = \beta$.  For example, let $a_{i_1i_2}\in\text{ Zone 1}$,\hspace{1mm}$a_{i_2i_3}\in\text{ Zone 2}$,\hspace{1mm}$a_{i_3i_4}\in\text{ Zone 2}$,\hspace{1mm}\text{and }$a_{i_4i_1}\in\text{Zone 1}$ to get the system of equations
\begin{align}
\begin{split}
\alpha \pi(0)-\beta \pi(1) &\ = \ -\beta \pi(2) + \alpha \pi(3)\\
-\beta \pi(1) + \alpha \pi(2) &\ = \ \alpha \pi(3) - \beta \pi(0).
\end{split}
\end{align}
Assume $\alpha\ne\beta$.  Subtracting the two equations, we have
\begin{align}
\begin{split}
\alpha\left(\pi(0)-\pi(2)\right)=\beta\left(\pi(0)-\pi(2)\right),
\end{split}
\end{align}
for which there are no valid solutions.  Likewise, any other choice of zones either produces a similar obstruction or introduces an extra linear constraint on the generating indices that constitutes a loss in degrees of freedom.  Therefore, the word \textit{abab} does not contribute when $\alpha \ne \beta$.
For $\alpha = \beta$, we are reduced to the original Toeplitz matrices, and \cite{B} and \cite{HM} show that the contribution is $2/3$.  Let $M_k(T,\alpha,\beta)$ denote the $k\textsuperscript{th}$ moment of the generalized Toeplitz limiting spectral distribution. Thus, we have
\begin{align}\label{eq:eqref320}
M_4(T,\alpha,\beta)\ = \  \begin{cases}
       2 & \alpha\ne\beta \\\\
     2\frac{2}{3} & \alpha\ = \ \beta\hspace{2mm}\text{  (original Toeplitz)}.
     \end{cases}
\end{align}
\subsubsection{Sixth Moment}
The pair-matched words of length six are \textit{aabbcc}, \textit{aabccb}, \textit{aabcbc}, \textit{abacbc}, and \textit{abcabc}, along with other words that are isomorphic (equal up to a rotation) to these.  Respectively, there are 2, 3, 6, 3, and 1 versions for the configurations corresponding to these words.\footnote{For example, the Catalan configuration, or partition structure, shown in Figure \ref{fig:figure1} has two versions, the isomorphic words \textit{aabbcc}\text{ and }\textit{abbcca}.}
By Lemma \ref{lem:lemma22} the words \textit{aabbcc}, \textit{aabccb}, and other words isomorphic to either of them contribute one to the sixth moment.
For the word \textit{aabcbc}, we can ``lift'' the adjacent pair and relabel the remaining indices so that the fourth moment structure for the word \textit{abab} remains. Therefore, versions of the configuration corresponding to this word contribute 0 when $\alpha \ne \beta$ and $2/3$ when $\alpha = \beta$.
For the word \textit{abacbc}, if $\alpha \ne \beta$, there are no choices of zones that do not produce an obstruction or introduce an extra linear constraint on the generating indices.  When $\alpha = \beta$, \cite{B} and \cite{HM} compute the contribution to be $1/2$.
For the word \textit{abcabc}, there are two choices of zones that do not produce extra constraints.  The L-value equations for these cases are:
\begin{align}\label{3.11}
\begin{split}
-\beta \pi(0) + \alpha \pi(1) &= \alpha \pi(3) -\beta \pi(4)\\
\alpha \pi(1) - \beta \pi(2) &= -\beta \pi(4) + \alpha \pi(5)\\
-\beta \pi(2) + \alpha \pi(3)  &= \alpha \pi(5) - \beta \pi(0)
\end{split}
\end{align}
and
\begin{align}\label{3.12}
\begin{split}
\alpha \pi(0) - \beta \pi(1) &= -\beta \pi(3) + \alpha \pi(4)\\
-\beta \pi(1) + \alpha \pi(2) &= \alpha \pi(4) - \beta \pi(5)\\
\alpha \pi(2) - \beta \pi(3)  &= -\beta \pi(5) + \alpha \pi(0).
\end{split}
\end{align}
For Eq. \eqref{3.11} we can choose $\pi(0),\hspace{1mm}\pi(1),\hspace{1mm}\pi(2)\text{, and }\pi(3)$ freely.  Then $\pi(4)$ and $\pi(5)$ are fixed:
\begin{align}
\begin{split}
\pi(4) &= \pi(0)-\frac{\alpha}{\beta}\pi(1)+\frac{\alpha}{\beta}\pi(3)\\
\pi(5) &= \pi(3) -\frac{\beta}{\alpha}\pi(2)+\frac{\beta}{\alpha}\pi(0).
\end{split}
\end{align}
Using the same variable transformation as in Lemma \ref{lem:lemma22}, we count the contribution by integrating the indicator function, $\mathbb{I}$, acting on the regions defined above along with the choice of zones.  Letting $a = \frac{\beta}{\alpha}$, the contribution is
\begin{align}
\begin{split}
{} & \int_0^1\int_0^1\int_0^1\int_0^1\mathbb{I}(0 \le v_0-\frac{v_1}{a}+\frac{v_3}{a}\le1\text{ and }0 \le av_0-av_2+v_3\le1\text{ and }v_0>v_1\text{ and } \\
{} & \hspace{27mm}v_1<v_2\text{ and } v_2>v_3\text{ and } v_3<v_0-\frac{v_1}{a}+\frac{v_3}{a} \text{ and } v_0-\frac{v_1}{a}+\frac{v_3}{a}>av_0\\
{} & \hspace{27mm} -av_2+v_3 \text{ and } av_0-av_2+v_3\le v_0)dv_1dv_2dv_3dv_0,
\end{split}
\end{align}
which simplifies to
 \begin{equation}\label{3.16}\begin{cases}
       \frac{\alpha}{4}\frac{1}{\alpha+\beta} & \alpha<\beta \\\\
      \frac{\beta}{4}\frac{1}{\alpha+\beta} & \alpha > \beta.
     \end{cases}
\end{equation}
Since Eq. \eqref{3.12} is related to Eq. \eqref{3.11} by a permutation of the indices, the contribution from that case is also given by Eq. \eqref{3.16}.\footnote{$\pi(0)$, $\pi(1)$, and $\pi(2)$ change to $\pi(3)$, $\pi(4)$, and $\pi(5)$, respectively.}
Since there are 2 versions of the configuration corresponding to the word \textit{aabbcc} and 3 versions of the configuration corresponding to the word \textit{aabccb}, the Catalan words contribute a total of 5 to the moment.  Including the extra factors from the word \textit{abcabc}, the sixth moment is
 \begin{equation}M_6(T,\alpha,\beta)\ = \ \begin{cases}\label{eq:eqref321}
       5 + \frac{\alpha}{2}\frac{1}{\alpha+\beta} & \alpha<\beta \\\\
     5 + \frac{\beta}{2}\frac{1}{\alpha+\beta} & \alpha > \beta \\\\
     11 & \alpha\ = \ \beta\hspace{2mm}\text{  (original Toeplitz)}.
     \end{cases}
\end{equation}
\subsubsection{Existence of Higher Moments, Bounds, and Limiting Behavior}
Unfortunately, all higher moments for this link function become increasingly computationally intensive.  Although we cannot find a closed-form expression for higher moments, we can show that all higher moments exist and are finite.
\begin{lem}[Existence of Higher Moments]\label{lem:lemma31} If the probability distribution $p$ has mean 0, variance 1, and finite higher moments, then for all nonnegative integers $k$, $M_k = \text{lim}_{N\rightarrow\infty}M_k(N)$ exists and is finite for generalized Toeplitz matrices.\end{lem}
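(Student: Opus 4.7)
The plan is to apply the Method of Moments framework from \S\ref{sec:sec2}, establishing uniform boundedness together with word-by-word convergence. First I would verify that $L_{T,\alpha,\beta}$ satisfies Property B: for any fixed $i$ and target $t\in\Z$, the equation $L_{T,\alpha,\beta}(i,m)=t$ is piecewise linear in $m$ with at most one solution per zone, so $\Delta(L_{T,\alpha,\beta})\le 2$. Odd moments then vanish by Lemma \ref{ref:lemma23}, so it suffices to treat the even case; and by the standard argument behind Eq.~\eqref{1.21}, only pair-matched words contribute to the leading order of $M_{2k}(N)$.

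Next, any pair-matched word $w$ of length $2k$ has $k+1$ generating vertices and $k$ non-generating vertices. The generating vertices contribute at most $N$ choices each, and by Property B each non-generating vertex contributes at most $2$ choices once the three other indices in the corresponding matching equation are fixed. Therefore $\#\Pi(w)\le 2^k N^{k+1}$. Since there are at most $(2k-1)!!$ pair-matched words of length $2k$, summing over words produces a uniform bound
\begin{equation*}
|M_{2k}(N)| \ \le\ (2k-1)!!\cdot 2^k\cdot C_p(k)\, +\, o(1),
\end{equation*}
where $C_p(k)$ depends only on the $2k$-th absolute moment of $p$; in particular $\limsup_{N\to\infty} M_{2k}(N)<\infty$. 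This already yields finiteness of any limit point.

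Finally, to upgrade boundedness to existence of the limit I would show that $\lim_{N\to\infty}\#\Pi(w)/N^{k+1}$ exists for each fixed pair-matched word $w$. For each of the finitely many zone assignments to the $2k$ edges, the $L$-matching constraints form a linear Diophantine system with integer coefficients in $\alpha$ and $\beta$. Rescaling $v_j=\pi(j)/N$ converts the integer count into a Riemann sum approximating $\kappa\cdot\operatorname{Vol}(R)$, where $R\subset [0,1]^{k+1}$ is the polytope cut out by the rescaled constraints and $\kappa$ is a rational density arising from divisibility by $\alpha$ and $\beta$. The main obstacle is exactly this divisibility analysis: a non-generating index written as a rational combination of generating indices is an integer only on an arithmetic progression of generating tuples, and one must verify that the density of admissible tuples stabilizes to a well-defined constant $\kappa$ independent of $N$. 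Once this is carried out, summing contributions over the finitely many zone assignments and pair-matched words yields a finite limit $M_{2k}$, completing the proof.
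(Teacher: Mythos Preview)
Your approach is essentially the same as the paper's: rescale $v_j=\pi(j)/N$, interpret the count $\#\Pi(w)/N^{k+1}$ as a Riemann sum over a polytope in $[0,1]^{k+1}$, and take volumes in the limit. The paper's proof is in fact just a one-sentence sketch deferring to the reference \cite{X}, so your write-up is more detailed than the original.

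One point worth noting: you correctly flag the divisibility density $\kappa$ as the main technical obstacle, and this is something the paper's proof does not address at all --- it simply speaks of ``the finite volume of this region'' as if the rescaled count were a pure Riemann sum. As your own expressions like $\pi(4)=\pi(0)-\tfrac{\alpha}{\beta}\pi(1)+\tfrac{\alpha}{\beta}\pi(3)$ (which appear in the paper's sixth-moment computation) make clear, integrality of the non-generating indices imposes congruence conditions on the generating tuple, and the limiting contribution is really $\kappa\cdot\mathrm{Vol}(R)$ rather than $\mathrm{Vol}(R)$ alone. You have not carried out that stabilization argument, but neither does the paper; the standard route is an Ehrhart-type lattice-point count, which does give a well-defined rational $\kappa$ for any fixed linear system with integer coefficients. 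So your proposal is at least as rigorous as the paper's own proof, and arguably more honest about where the work lies.
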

\begin{proof}
As described in \cite{X}, for any word $w$ of length $2k$ we obtain a system of linear equations relating the transformed variables $v_0,\dots,v_{2k-1}\in\{\frac{1}{N},\dots,\frac{N}{N}\}$, which determine a nice region in the $(k+1)$-dimensional unit cube.  As $N\rightarrow\infty$, we obtain the finite volume of this region, transform back to the variables $\pi(x)$, and then extract the finite limiting moment by dividing by $N^{k+1}$.
\end{proof}
\begin{lem}[Bounds on the Moments]\label{lem:lemma32}  Let $C_{2k}$ be the $2k\textsuperscript{th}$ moment of the semicircle distribution, $M_{2k}(T)$ the $2k\textsuperscript{th}$ moment of the Toeplitz ensemble limiting distribution, and $M_{2k}\left(T,\alpha,\beta\right)$ the $2k\textsuperscript{th}$ moment of the generalized Toeplitz ensemble limiting distribution.  Then $C_{2k}\le M_{2k}\left(T,\alpha,\beta\right) \le M_{2k}(T)$ for all nonnegative integers $k$.\end{lem}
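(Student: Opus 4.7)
The plan is to prove the two inequalities separately. The lower bound $C_{2k} \le M_{2k}(T,\alpha,\beta)$ follows almost immediately from what has already been established. When $\alpha \ne \beta$, Lemma \ref{lem:lemma22} asserts that each of the $C_{2k}$ Catalan pair-matched words of length $2k$ contributes exactly one to $M_{2k}(T,\alpha,\beta)$; since the contribution of any pair-matched word is a nonnegative limiting volume, summing over all pair-matched words yields $M_{2k}(T,\alpha,\beta)\ge C_{2k}$. When $\alpha=\beta$, the ensemble reduces to the original Toeplitz, and the same argument (applied with $p_1=p_2$ handled via the absolute value) gives the bound, as in \cite{HM}.

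For the upper bound $M_{2k}(T,\alpha,\beta)\le M_{2k}(T)$, the plan is to prove a word-by-word comparison. Fix a pair-matched word $w$ of length $2k$. Its contribution to either moment is obtained by summing, over all zone assignments $\sigma$ of the letters of $w$ to Zone~1 or Zone~2, the $(|w|+1)$-dimensional volume of the region in $[0,1]^{|w|+1}$ of generating indices on which the affine expressions for the non-generating indices remain in $[0,1]$. The matching constraints produce systems of linear equations whose coefficients have absolute value at most $1$ in the original case and involve $\alpha,\beta$ in the generalized case. I would then split into two subcases. In the first, the generalized system imposes an extra independent linear constraint on the generating indices, as occurs for the word \textit{abab} at the fourth moment and for \textit{abacbc} at the sixth moment; such zone configurations contribute zero when $\alpha\ne\beta$ while they contribute positively when $\alpha=\beta$. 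In the second, the generalized and original systems cut out affine subspaces of the same dimension in $\mathbb{R}^{|w|+1}$, and I would compare the corresponding admissible regions by a linear change of variables absorbing the ratios $\alpha/\beta$ and $\beta/\alpha$, checking that the rescaling does not enlarge the unit-cube intersection relative to the original. The explicit formula \eqref{3.16} for the contribution of \textit{abcabc} already exhibits this behaviour: it is a piecewise expression in $\alpha/\beta$ that is maximized at $\alpha=\beta$, matching the original Toeplitz contribution for that zone assignment.

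The main obstacle is this second subcase. For an arbitrary word of arbitrary length, an affine slice of the unit cube need not vary monotonically in its slope, so pointwise dominance of the integrand is not automatic. I would attempt an inductive argument on $k$, using an adjacent-pair lifting step in the spirit of Lemma \ref{lem:lemma22} adapted to non-crossing subpatterns inside longer crossing words. Lifting an adjacent pair introduces a free index and reduces the problem to a word of length $2k-2$, on which the inductive hypothesis applies; the base cases $k=1$ and $k=2$ are covered by Lemma \ref{ref:lemma23} and by the explicit sixth-moment computation in \eqref{eq:eqref321}. If the inductive step is completed, summing the per-word bound over all pair-matched words of length $2k$ yields $M_{2k}(T,\alpha,\beta)\le M_{2k}(T)$, which together with the lower bound completes the proof.
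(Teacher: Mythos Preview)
Your lower bound argument coincides with the paper's: by Lemma~\ref{lem:lemma22} each Catalan word contributes one, and since every pair-matched word contributes a nonnegative limiting count, $M_{2k}(T,\alpha,\beta)\ge C_{2k}$.

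For the upper bound the paper takes a much shorter route than your volume comparison. It argues directly at the level of circuit counts: with $g=\gcd(\alpha,\beta)$, a fixed L-value has at most $\lceil N/(\max(\alpha,\beta)/g)\rceil$ matches in the upper triangle, which for $\alpha\ne\beta$ is strictly fewer than in the original Toeplitz case $\alpha=\beta$. Fewer L-matches means no more solutions to the Diophantine system defining $\Pi(w)$ for any word $w$, so $\#\Pi(w)$ in the generalized ensemble is dominated by $\#\Pi(w)$ in the Toeplitz ensemble, and the inequality follows after dividing by $N^{k+1}$ and summing over words. No analysis of affine slices of the cube, no case split on the rank of the linear system, and no induction are needed.

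Your proposed induction does not close the gap you correctly flag. Adjacent-pair lifting applies only to words that \emph{possess} an adjacent pair; fully crossed words such as $abcabc$, $abcdabcd$, and their higher analogues have none, so the inductive step never reaches them and they must be handled directly at every $k$. That forces you back into the ``second subcase'' volume comparison, for which, as you note, monotonicity of the slice volume in $\alpha/\beta$ is not automatic. The paper's matching-count argument sidesteps this geometry entirely, and I would recommend replacing your upper-bound plan with it.
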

\begin{proof}
Since each Catalan word contributes one, the moments are at least as large as the semicircle moments.  For a non-Catalan word, the $\alpha$ and $\beta$ in the link function decrease the contributions that otherwise occur when $\alpha = \beta$.  Let $g=\text{gcd}(\alpha,\beta)$.  Then for a given matrix entry, there are at most $\left\lceil\frac{N}{\max\left(\alpha,\beta\right)/g}\right\rceil$ L-matches in the upper triangle of the matrix.  When $\alpha \ne \beta$ there are fewer matchings than the Toeplitz case, for which $\alpha = \beta$, which means there cannot be more solutions to the relevant Diophantine equations, and hence the upper bound holds.
\end{proof}  
We can also show that in the limit as either $\alpha$ or $\beta$ becomes very large, and the other is fixed, the moments of the limiting distribution for the generalized Toeplitz ensemble approach those of the semicircle measure.\footnote{This is evidenced by Eq. \eqref{eq:eqref321}.}
\begin{lem}[Limiting Behavior]\label{lem:lemma33}
For fixed $\alpha,\hspace{1mm}\lim_{\beta \rightarrow \infty}  \hspace{2mm}M_{2k}(T,\alpha,\beta) = C_{2k}$, and for fixed $\beta,\hspace{1mm}\text{lim}_{\alpha \rightarrow \infty}  \hspace{2mm}M_{2k}(T,\alpha,\beta) = C_{2k}$  for generalized Toeplitz matrices when the limits are taken appropriately.\end{lem}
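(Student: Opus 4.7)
The plan is to write $M_{2k}(T,\alpha,\beta)$ via \eqref{1.21} as a finite sum over pair-matched words of length $2k$, separate the Catalan contribution from the non-Catalan contribution, and show that the latter vanishes in each limit while the former equals $C_{2k}$. First I would invoke Lemma \ref{lem:lemma22}: since $\alpha\neq\beta$ eventually (under either limit), $p_1(x)=\alpha x$ and $p_2(x)=\beta x$ are distinct monotonic polynomials on $\mathbb{N}$, so each Catalan word of length $2k$ contributes exactly $1$, and these sum to $C_{2k}$.

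Second, I would show that for any fixed non-Catalan pair-matched word $w$ of length $2k$, its contribution $c_w(\alpha,\beta):=\lim_{N\to\infty}\frac{1}{N^{k+1}}\#\Pi(w)$ satisfies $c_w(\alpha,\beta)\to 0$ as $\beta/\alpha\to\infty$ (or $\alpha/\beta\to\infty$). Following the integral setup used in \S\ref{sec:sec31} and in Lemma \ref{lem:lemma31}, I would fix a valid assignment of zones to each letter; the resulting L-value equations express the non-generating rescaled variables $v_i=\pi(i)/N$ as linear combinations of the generating ones with coefficients of the form $\pm\alpha/\beta$ or $\pm\beta/\alpha$, and the zone inequalities cut out a region $R_w(\alpha,\beta)\subset[0,1]^{k+1}$. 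Since $w$ is non-Catalan it contains at least one crossing, and in the $abab$ and $abcabc$ analyses of \S\ref{sec:sec31} the zone equations in a crossing forced either an outright loss of a degree of freedom (contribution $0$) or a constraint whose geometric effect was to confine $R_w(\alpha,\beta)$ to a region of measure $\min(\alpha,\beta)/(\alpha+\beta)$ times a bounded factor. The content of this step is to show that the pattern persists for every non-Catalan word: the Lebesgue measure of $R_w(\alpha,\beta)$ is a homogeneous rational function in $(\alpha,\beta)$ of degree zero that carries a factor of $\min(\alpha,\beta)/\max(\alpha,\beta)$ to some positive power, and hence vanishes in each of our two limits.

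Finally, because there are only finitely many pair-matched words of length $2k$ and each $c_w$ is uniformly bounded by Lemma \ref{lem:lemma32}, I would exchange the limit with the finite sum to conclude $\lim M_{2k}(T,\alpha,\beta)=C_{2k}$. The main obstacle is the structural claim in the middle step: proving in generality (rather than word-by-word) that crossings always force a factor of $\min(\alpha,\beta)/\max(\alpha,\beta)$. I would attempt this by induction on $k$, first using the adjacent-lifting idea of Lemma \ref{lem:lemma22} to eliminate any Catalan-style double letters from $w$, reducing to a pure crossing partition; then solving the crossing L-value system linearly in the generating variables and observing that whenever a zone assignment survives Lemma \ref{lem:lemma21}'s constraint, the resulting rational expressions for the non-generating $v_i$'s contain $\alpha/\beta$ (or $\beta/\alpha$) in a nontrivial way, so the admissible region contracts as the ratio becomes extreme. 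The phrase ``when the limits are taken appropriately'' in the statement most likely reflects a mild technical restriction — e.g., taking $\beta$ through multiples of $\alpha$, or letting $\beta\to\infty$ over integers coprime to $\alpha$ — needed so that the integer Diophantine solution counts are genuinely captured by the continuum volumes above without integer-divisibility artifacts.
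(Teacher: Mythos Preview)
Your overall plan---split into Catalan and non-Catalan words, then show the non-Catalan contributions vanish---matches the paper's, but the mechanism you propose for the second step differs substantially, and the part you yourself flag as ``the main obstacle'' is exactly where the paper takes a different route.

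You interpret the lemma as a sequential limit: first compute $M_{2k}(T,\alpha,\beta)=\sum_w c_w(\alpha,\beta)$ for each fixed $(\alpha,\beta)$, then send $\beta\to\infty$, arguing that each non-Catalan $c_w(\alpha,\beta)$ is a rational volume carrying a positive power of $\min(\alpha,\beta)/\max(\alpha,\beta)$. The paper does not do this. The phrase ``when the limits are taken appropriately'' means that $\beta$ is \emph{coupled to $N$}: one sets $\beta=f(N)$ with $\lim_{N\to\infty} f(N)=\infty$ and takes the single limit $N\to\infty$. Under that coupling no volume computation is needed. For any fully crossed word the paper selects a non-adjacent matched pair $a_{L(\pi(0),\pi(1))}=a_{L(\pi(2),\pi(3))}$ chosen so that all letters strictly between the two occurrences are distinct; this guarantees that $\pi(0),\pi(1),\pi(2)$ are all generating. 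Once $\pi(0)$ and $\pi(1)$ are fixed, the number of matrix entries sharing that $L$-value is at most $2\lceil N/(f(N)/g)\rceil$ with $g=\gcd(\alpha,f(N))$, which is $o(N)$. Hence the four indices in this pair carry fewer than three degrees of freedom, the word has fewer than $k+1$ in total, and its contribution is zero. Words with some adjacent pairs are handled by lifting down to a fully crossed subword, as you say.

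Your route could in principle yield the stronger sequential statement, but the structural claim that \emph{every} crossing forces a factor of $\min(\alpha,\beta)/\max(\alpha,\beta)$ in the limiting volume is not established by the sketch you give; the linear-algebra/induction plan is plausible for the small cases you cite but is not a proof in general. The paper's coupling trick sidesteps this entirely at the price of proving a slightly weaker notion of convergence. Finally, your reading of ``appropriately'' as a coprimality or divisibility restriction on $\beta$ is not what is meant---it refers to the $\beta=f(N)$ coupling.
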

\begin{proof}
Consider the limit as $\beta \rightarrow \infty$.  To prove that the limiting spectral measure is a semicircle, it suffices to show that all non-Catalan words contribute zero.  Specifically, we will show that the contribution of a word that is fully crossed, or a word that has no adjacent pairs, is zero.  This shows that all non-Catalan words contribute zero, since the contribution of any non-Catalan word with adjacent pairs is calculated by ``lifting'' the adjacent pairs, and any loss in degrees of freedom in lower moments will propagate through these adjacent pairs.  We will let $\beta$ grow to infinity as a function of $N$, such that $\text{lim}_{N \rightarrow \infty}f(N) = \infty$. \par
Any fully crossed word $w$ has non-adjacent pairs of the form $a_{L(i_s,i_{s+1})} = a_{L(i_t,i_{t+1})}$ for some $s,t\in\mathbb{Z}^+$.  For any such pair there are four possible sets of zones for the L-value equation, each of which  results in one index with coefficient $\alpha$ on both sides of the equation and one index with coefficient $\beta$ on both sides of the equation.  Without loss of generality, then, we assume that $a_{i_si_{s+1}},\hspace{1mm}a_{i_ti_{t+1}}\in\text{ Zone 1}$.  In the following argument, we also want to choose entries to be specific letters in the fully crossed word.  We pick this matched pair such that all letters between the letters that correspond to these entries are distinct.  This must be possible, otherwise the word would have an adjacent pair.  For the resulting pair, choose the first index to be the generating index $\pi(0)$.  Then the next index, $\pi(1)$, is also a generating index, as it now corresponds to the first letter in the word.  Although we cannot know in general the location of the other indices, we just name them $\pi(2)$ and $\pi(3)$.  Since intermediate letters are distinct and the word began with generating index $\pi(0)$, $\pi(2)$ must correspond to the first occurrence of a letter in the word, making $\pi(2)$ another generating index.  Therefore, there are at least three generating indices in this pair.  If there are $x$ degrees of freedom in choosing all four indices, there are at most $x+k-2$ degrees of freedom for the word.  If we can show that $x<3$, then any non-Catalan word of length $2k$ contributes zero to the $2k\textsuperscript{th}$ moment, since there must be $k+1$ degrees of freedom for a contribution.\par
There are at most $N$ choices for $\pi(0)$ and $N$ choices for $\pi(1)$.  Then the number of choices for $\pi(2)$ and $\pi(3)$ is equivalent to the number of matchings in the matrix for entry $a_{i_1i_2}$, given fixed values for $\pi(0)$ and $\pi(1)$.  When $f(N)>\alpha$, there are at most $2\left\lceil\frac{N}{f(N)/g}\right\rceil$ matchings, where $g = \text{gcd}(\alpha,f(N))$.  To see if there are fewer than three degrees of freedom, we check if $\text{lim}_{N\rightarrow\infty}\frac{\#\Pi(w)}{N^3} = 0.$  Dropping the ceiling notation, we have
\begin{equation}
\text{lim}_{N\rightarrow\infty}\frac{\#\Pi(w)}{N^3}\propto \text{lim}_{N\rightarrow\infty}\frac{2N^3}{N^3f(N)/g}\ = \ \text{lim}_{N\rightarrow\infty}\frac{2g}{f(N)}\ = \ 0.
\end{equation}
This holds for every set of zones, so $x<3$.  A similar proof holds for $\alpha \rightarrow \infty$.
\end{proof}
\subsection{Generalized Hankel Matrices}\label{sec:sec32}
Recall that for fixed positive integers $\alpha$ and $\beta$, the generalized Hankel link function is\footnote{Note that this link function is of the form Eq. \eqref{2.1}.}
\begin{align}L_{H,\alpha,\beta}(i,j) \ := \  \begin{cases}
\alpha i + \beta j & i\le j\\
\beta i + \alpha j & i > j.
\end{cases}
\end{align}
A matrix with $\alpha = \beta$ reduces to the original Hankel matrix, while a $5\times5$ matrix with $\alpha = 2$ and $\beta = 1$ would have the structure
\begin{align} A_5\ = \ \left( \begin{array}{ccccc}
a_{3} & a_{4}& a_{5}& a_6 & a_{7}\\
a_{4} & a_{6}& a_{7}& a_8 & a_{9}\\
a_{5} &a_{7}& a_{9}& a_{10} & a_{11}\\
a_{6} & a_{8} & a_{10}& a_{12}& a_{13} \\
a_{7} & a_{9}& a_{11}& a_{13} & a_{15} \end{array} \right). \end{align}
\begin{figure}
\centering
\includegraphics[scale = 0.57]{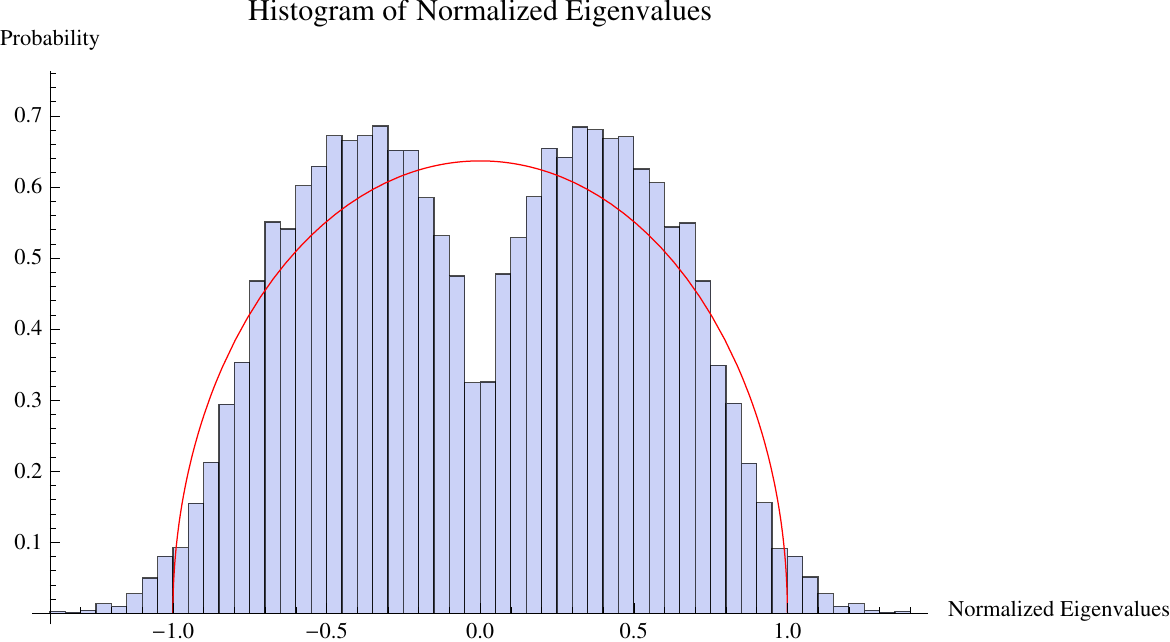}
\includegraphics[scale = 0.35]{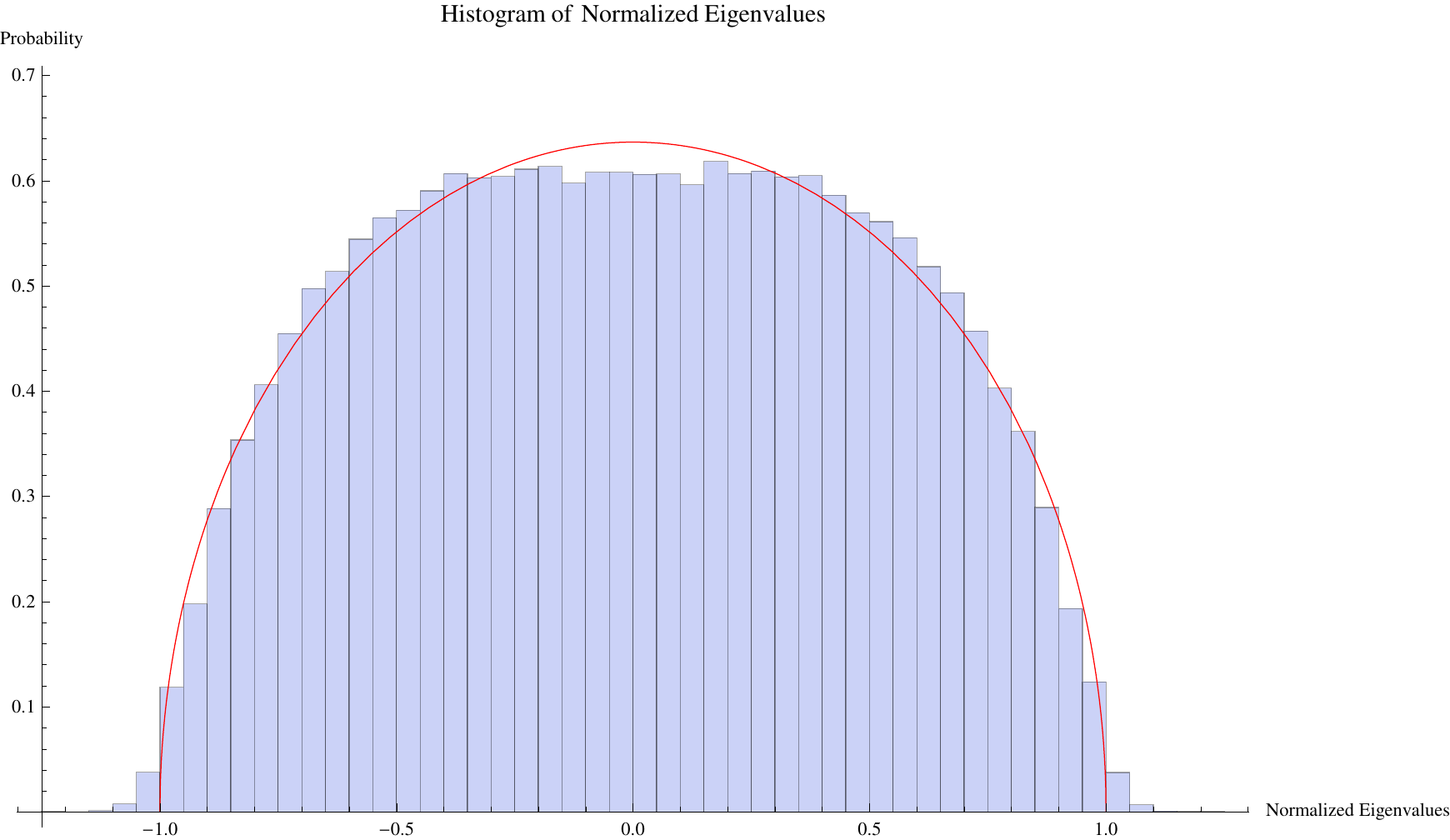}
\includegraphics[scale = 0.35]{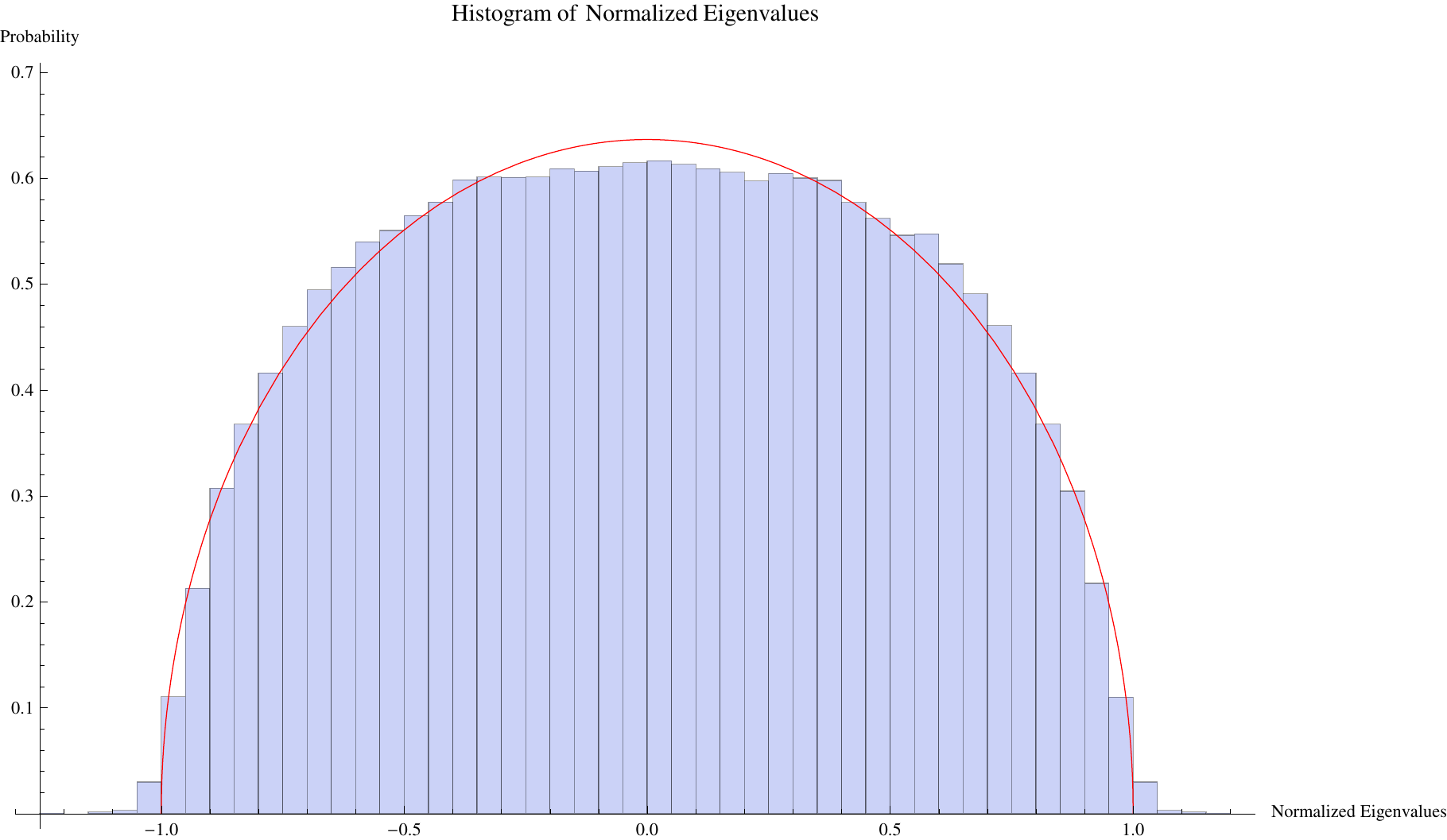}
\includegraphics[scale = 0.30]{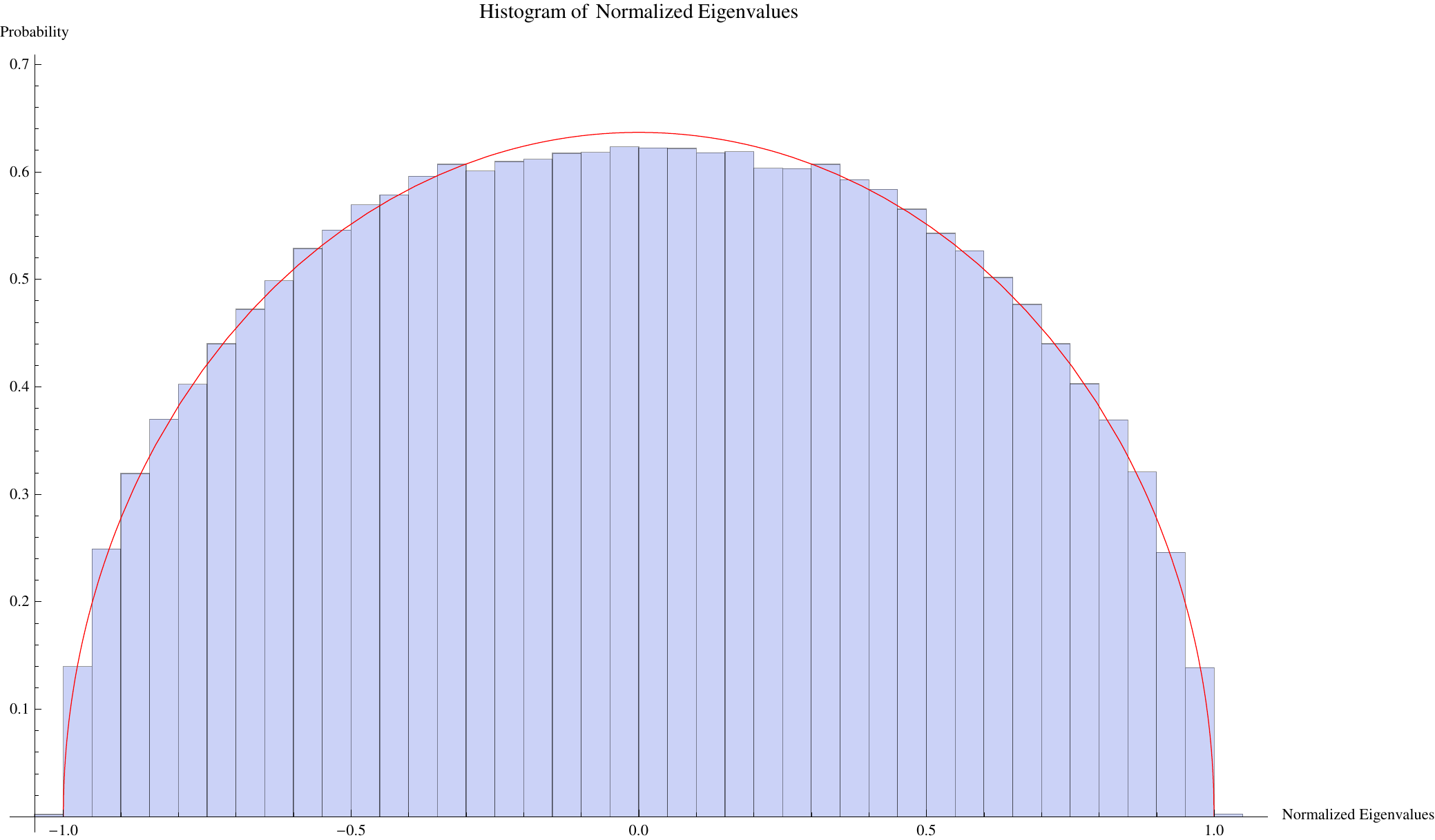}
\caption{Histograms of numerical eigenvalues of 100 generalized Hankel matrices of size $1200\times1200$.  Each has $\alpha = 1$.  From the upper left, going from left to right, $\beta$ is equal to 1, 2, 3, and 4.  The red curve is the semicircle distribution for an eigenvalue normalization of $2\sqrt{N}$ (see footnote \ref{foot:foot3}).  With increasing values of $\beta$, the distribution becomes more semicircular.}
\end{figure}
\subsubsection{Fourth Moment}
By Lemma \ref{lem:lemma22}, all Catalan words contribute one.\footnote{This holds for $\alpha = \beta$ and $\alpha \ne \beta$.  See \cite{B} for the case $\alpha = \beta$, which corresponds to original Hankel matrices.}  Now consider the word \textit{abab}.  By the same argument that applied in the generalized Toeplitz case, there will be an extra constraint in the L-value equations if $\alpha \ne \beta$.  For $\alpha = \beta$, we are reduced to original Hankel matrices, and \cite{B} shows that the contribution for this word is zero.  Thus, if $M_k(H,\alpha,\beta)$ denotes the $k\textsuperscript{th}$ moment of the generalized Hankel limiting spectral distribution, 
\begin{align}\label{eq:eqref322}
\begin{split}
M_4(H,\alpha,\beta)\ = \ 2,
\end{split}
\end{align}
the same as for original Hankel matrices.
\subsubsection{Sixth Moment}
By Lemma \ref{lem:lemma22}, the words \textit{aabbcc}, \textit{aabccb}, and other words isomorphic to them contribute one to the sixth moment.  Using the process of ``lifting'', the word \textit{aabcbc} contributes zero, since the non-contributing structure \textit{abab} is embedded within that word.  Counting linear constraints, the word \textit{abacbc} contributes zero when $\alpha \ne \beta$.  It also contributes zero when $\alpha = \beta$, according to calculations in \cite{B}.
For the word \textit{abcabc}, there are two sets of contributing L-value equations:
\begin{align}
\begin{split}
\beta\pi(0)+\alpha\pi(1) &= \alpha\pi(3)+\beta\pi(4)\\
\alpha\pi(1)+\beta\pi(2) &= \beta\pi(4)+\alpha\pi(5)\\
\beta\pi(2)+\alpha\pi(3) &= \alpha\pi(5)+\beta\pi(6)
\end{split}
\end{align}
and
\begin{align}
\begin{split}
\alpha\pi(0)+\beta\pi(1) &= \beta\pi(3)+\alpha\pi(4)\\
\beta\pi(1)+\alpha\pi(2) &= \alpha\pi(4)+\beta\pi(5)\\
\alpha\pi(2)+\beta\pi(3) &= \beta\pi(5)+\alpha\pi(6).
\end{split}
\end{align}
Following the same integration procedure that applied in \S\ref{sec:sec31}, it can be shown that
\begin{align}M_6(H,\alpha,\beta)\ = \ \begin{cases}\label{eq:eqref323}
       5 + \frac{\alpha}{2}\frac{\beta}{(\alpha + \beta)^2} & \alpha<\beta \\\\
     5 + \frac{\alpha}{2}\frac{\beta}{(\alpha+\beta)^2} & \alpha > \beta \\\\
     5\frac{1}{2} & \alpha\ = \ \beta\hspace{2mm}\text{  (original Hankel)}.
     \end{cases}
\end{align}
\subsubsection{Existence of Higher Moments, Bounds, and Limiting Behavior}
As in \S\ref{sec:sec31}, although we cannot find a closed-form expression for higher moments, all higher moments exist and are finite.
\begin{lem}[Existence of Higher Moments]\label{lem:lemma34} If the probability distribution $p$ has mean 0, variance 1, and finite higher moments, then for all $k$, $M_k = \lim_{N\rightarrow\infty}M_k(N)$ exists and is finite for generalized Hankel matrices.\end{lem}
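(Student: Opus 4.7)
The plan is to mirror the proof of Lemma \ref{lem:lemma31} and exploit the fact that the generalized Hankel link function $L_{H,\alpha,\beta}$ fits into the general framework of bivariate polynomial link functions established in \S\ref{sec:sec2}. First I would verify that $L_{H,\alpha,\beta}$ satisfies Property B, so that only pair-matched words contribute and odd moments vanish by Lemma \ref{ref:lemma23}. This reduces the analysis of $M_k(N)$ to the finitely many pair-matched words of length $2k$, of which there are $(2k-1)!!$, so the existence and finiteness of $M_k$ will follow from the existence and finiteness of the individual contributions per word.

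Next I would fix a pair-matched word $w$ of length $2k$ and, for each assignment of zones (Zone 1 or Zone 2) to the $2k$ edges of the associated circuit, write down the corresponding system of L-value equations. Because $L_{H,\alpha,\beta}$ is linear in each index (with integer coefficients $\alpha,\beta$), each matching constraint is a linear equation in the indices $\pi(0),\dots,\pi(2k-1)$, and the non-generating indices can be solved as linear functions of the generating ones. After the standard rescaling $v_x = \pi(x)/N$ with $v_x\in\{1/N,\dots,1\}$, the combined matching and zone-inequality constraints cut out a piecewise-linear semialgebraic region $R_w\subset [0,1]^{k+1}$ in the generating variables, possibly empty if the system is overdetermined. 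The contribution of $w$ to $\frac{1}{N^{k+1}}\#\Pi(w)$ is then a Riemann sum for $\mathbb{I}_{R_w}$ over the lattice $\{1/N,\dots,1\}^{k+1}$.

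Since $R_w$ is a bounded region defined by finitely many affine inequalities, its boundary has Lebesgue measure zero, so the Riemann sum converges to $\mathrm{vol}(R_w)\le 1<\infty$ as $N\to\infty$. Summing over all zone assignments and over all $(2k-1)!!$ pair-matched words yields
\[
M_{k}\ =\ \lim_{N\to\infty} M_k(N)\ =\ \sum_{w \text{ pair-matched}} \sum_{\text{zones}}\mathrm{vol}(R_w),
\]
which is a finite sum of finite numbers, hence finite. Combined with Lemma \ref{ref:lemma23} (which handles $k=0,1$ and all odd $k$), this establishes the claim for every nonnegative integer $k$.

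The main obstacle is a technical one: ensuring that the error in approximating $\mathrm{vol}(R_w)$ by the lattice Riemann sum truly vanishes uniformly in $w$. This requires the boundary of $R_w$ to be a finite union of affine hyperplanes (true here by linearity of $L_{H,\alpha,\beta}$) and a quantitative bound on how many lattice points lie within $O(1/N)$ of this boundary. Because the coefficients $\alpha,\beta$ are fixed integers and the number of bounding hyperplanes depends only on $k$ (not on $N$), this boundary-error contribution is $O(N^k)$, which is negligible after dividing by $N^{k+1}$. All other steps are direct analogues of the generalized Toeplitz argument in Lemma \ref{lem:lemma31}, so no new combinatorial input is needed.
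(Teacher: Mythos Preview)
Your proposal is correct and follows essentially the same approach as the paper: the paper's proof simply says ``The proof follows as in Lemma~\ref{lem:lemma31},'' and Lemma~\ref{lem:lemma31} itself is the same Riemann-sum/volume argument you spell out---rescale to $v_x=\pi(x)/N$, observe that the linear L-value equations and zone inequalities cut out a bounded polytope in the $(k+1)$-dimensional unit cube, and pass to the volume as $N\to\infty$. Your write-up is considerably more detailed than the paper's (in particular the boundary-error discussion and the explicit finiteness bookkeeping), but the underlying method is identical.
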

\begin{proof}
The proof follows as in Lemma \ref{lem:lemma31}.
\end{proof}
\begin{lem}[Bounds on the Moments]\label{lem:lemma35}  Let $C_{2k}$ be the $2k\textsuperscript{th}$ moment of the semicircle distribution, $M_{2k}(H)$ the $2k\textsuperscript{th}$ moment of the Hankel ensemble limiting distribution, and $M_{2k}\left(H,\alpha,\beta\right)$ the $2k\textsuperscript{th}$ moment of the generalized Hankel ensemble limiting distribution.  Then $C_{2k}\le M_{2k}\left(H,\alpha,\beta\right) \le M_{2k}(H)$ for all nonnegative integers $k$.\end{lem}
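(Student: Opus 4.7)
The plan is to follow the structure of Lemma~\ref{lem:lemma32} closely, since the generalized Hankel link function has the form of \eqref{2.1} with $p_1(x) = \alpha x$ and $p_2(x) = \beta x$, both strictly monotone on $\mathbb{N}$. For the lower bound $M_{2k}(H,\alpha,\beta) \ge C_{2k}$, Lemma~\ref{lem:lemma22} (Adjacent Lifting) applies, showing that every Catalan word of length $2k$ contributes exactly one to the $2k$-th limiting moment. There are $C_{2k}$ such words, and every remaining pair-matched word contributes a nonnegative quantity since $\#\Pi(w)/N^{k+1}$ is a count of integer solutions divided by a positive normalization. Summing yields the lower bound immediately.

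For the upper bound $M_{2k}(H,\alpha,\beta) \le M_{2k}(H)$, I would establish a word-by-word domination: for every pair-matched word $w$ of length $2k$, the limit $\lim_{N\to\infty}\#\Pi(w)/N^{k+1}$ computed with $L_{H,\alpha,\beta}$ is at most the corresponding limit computed with $L_{H,1,1}$ (the original Hankel link). The key input, analogous to the matching-count bound in Lemma~\ref{lem:lemma32}, is that the two-variable equation $\alpha i + \beta j = t$ admits at most $\lceil Ng/\max(\alpha,\beta)\rceil$ solutions $(i,j) \in \{1,\ldots,N\}^2$ for any fixed $t$, where $g = \gcd(\alpha,\beta)$. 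This bound is sharp (equal to $N$) precisely in the original Hankel case $\alpha = \beta$, where the level sets of $L$ are full skew-diagonals. Because a non-generating index in any circuit is determined, through a chain of such equations, by a sequence of fixed generating indices, the resulting count is bounded above word-by-word by the original Hankel count.

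The main obstacle is handling cross-zone pairs cleanly. When two matched entries lie in opposite zones, the matching equation becomes $\alpha i_1 + \beta i_2 = \beta i_3 + \alpha i_4$, which rearranges to $\alpha(i_1 - i_4) = \beta(i_3 - i_2)$. Depending on the overall zone assignment of the word, this either reduces to a $\gcd$-type bound no larger than the $\alpha = \beta$ case, or forces an additional linear constraint that strictly decreases the dimension of the solution set (and so strictly lowers the contribution relative to original Hankel). I would enumerate the finitely many zone assignments on each pair and verify domination in each case. Once per-word domination is established for every pair-matched $w$, summing over the finitely many words of length $2k$ and passing to $N \to \infty$ delivers the upper bound.
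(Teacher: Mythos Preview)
Your proposal is correct and follows exactly the approach the paper takes: the paper's proof of Lemma~\ref{lem:lemma35} simply reads ``The proof follows as in Lemma~\ref{lem:lemma32},'' and your argument mirrors that lemma's reasoning (Catalan words give the lower bound via Lemma~\ref{lem:lemma22}; the $\gcd$-based matching count $\lceil N/(\max(\alpha,\beta)/g)\rceil$ gives the upper bound by word-by-word domination). Your treatment of cross-zone pairs is more detailed than anything the paper supplies, but it is consistent with and elaborates on the same underlying idea.
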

\begin{proof}
The proof follows as in Lemma \ref{lem:lemma32}.
\end{proof}
In the limit as either $\alpha$ or $\beta$ becomes very large, and the other is fixed, the moments of the generalized Hankel ensemble approach those of the semicircle measure.\footnote{This is evidenced by Eq. \eqref{eq:eqref323}.}
\begin{lem}[Limiting Behavior]\label{lem36} For fixed $\alpha,\hspace{2mm}\text{lim}_{\beta \rightarrow \infty}  \hspace{2mm}M_{2k}(H,\alpha,\beta) = C_{2k}$ and for fixed $\beta,\hspace{2mm}\text{lim}_{\alpha \rightarrow \infty}  \hspace{2mm}M_{2k}(H,\alpha,\beta) = C_{2k}$  for generalized Hankel matrices when the limits are taken appropriately.\end{lem}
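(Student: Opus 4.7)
The plan is to mirror the proof of Lemma \ref{lem:lemma33} for generalized Toeplitz matrices, with the only real verification being that the linear Diophantine matching count for the Hankel L-values still collapses when $\beta\to\infty$ (respectively $\alpha\to\infty$). By Lemma \ref{lem:lemma22} every Catalan word contributes $1$ to $M_{2k}(H,\alpha,\beta)$, so it suffices to show that every non-Catalan pair-matched word $w$ of length $2k$ contributes $0$ in the limit. As in Lemma \ref{lem:lemma33}, one can successively lift any adjacent pairs of $w$ via Lemma \ref{lem:lemma22}, so it is enough to treat the case in which $w$ is fully crossed.

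For such a fully crossed $w$, I would select a matched pair $a_{L(i_s,i_{s+1})} = a_{L(i_t,i_{t+1})}$ all of whose intermediate letters are distinct, which is possible because $w$ has no adjacent pairs. After rotating the word so that this pair starts at position $0$, the four indices $\pi(0),\pi(1),\pi(2),\pi(3)$ involved include the three generating indices $\pi(0),\pi(1),\pi(2)$, while $\pi(3)$ is non-generating. Depending on the zone assignment for the two matched entries, the Hankel L-value equation reads $\alpha\pi(0)+\beta\pi(1) = \alpha\pi(2)+\beta\pi(3)$, or $\alpha\pi(0)+\beta\pi(1) = \beta\pi(2)+\alpha\pi(3)$, or one of the two reflections obtained by swapping $\alpha$ and $\beta$ on the left. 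In each case, with $\pi(0)$ and $\pi(1)$ fixed, the right-hand side is a linear Diophantine equation in $(\pi(2),\pi(3))$, and fixing either of $\pi(2),\pi(3)$ forces the other up to a congruence modulo $\beta/g$, where $g=\gcd(\alpha,\beta)$. Hence the number of admissible pairs $(\pi(2),\pi(3))\in\{1,\dots,N\}^2$ is at most $\lceil Ng/\beta\rceil$.

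Letting $\beta = f(N)$ with $f(N)\to\infty$, the total number of admissible four-tuples is $O(N^3 g/f(N)) = o(N^3)$, so this pair yields strictly fewer than $3$ degrees of freedom, and hence $\#\Pi(w) = o(N^{k+1})$, giving a contribution of $0$ after dividing by $N^{k+1}$. The case $\alpha\to\infty$ with $\beta$ fixed is identical once the roles of $\alpha$ and $\beta$ in the matching count are exchanged. The main (and only) obstacle is confirming that the all-positive Hankel link, unlike the mixed-sign Toeplitz link, does not introduce any cancellation or degenerate family of solutions across the four zone cases; since each of the four zone equations still contains $\beta$ as a nonzero coefficient on each side of the Diophantine constraint, the same bound $\lceil Ng/\beta\rceil$ applies and the Toeplitz argument transfers intact.
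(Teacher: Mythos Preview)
Your proposal is correct and is precisely the approach the paper takes: its proof of this lemma reads in full ``The proof follows as in Lemma~\ref{lem:lemma33},'' and you have carried out exactly that transfer, checking that each of the four Hankel zone equations still forces the pair $(\pi(2),\pi(3))$ into $O(Ng/\beta)$ choices so that the fully crossed word loses a degree of freedom. The only cosmetic point is that your phrase ``forces the other up to a congruence modulo $\beta/g$'' is slightly imprecise (in some zone cases the congruence is on the input variable rather than the output), but the resulting count $O(Ng/\max(\alpha,\beta))$ is correct and matches the paper's bound $2\lceil N/(f(N)/g)\rceil$.
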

\begin{proof}
The proof follows as in Lemma \ref{lem:lemma33}.
\end{proof}
In working towards proving Theorem \ref{thm:thm14}, we have shown that the fourth and sixth moments for both ensembles are functions of $\alpha$ and $\beta$ and that in the limit as either parameter tends to infinity, the limiting distributions converge to the semicircle.  It is also evident that when these parameters are fixed, the limiting spectral distributions are non-semicircular.  
\begin{lem}[Non-semicircular Limiting Distributions]\label{lem:lemma37}  The limiting spectral distributions for generalized Toeplitz and generalized Hankel matrices are non-semicircular.  
\begin{proof}
For any value of $\alpha$ and $\beta$, $M_6(T,\alpha,\beta)$ and $M_6(H,\alpha,\beta$) are not equal to $C_6 = 5$.  The calculated moment values are shown in the table below.\\
\begin{center}
{\small\textsc{Table 1.} Fourth and Sixth Moments for Generalized Toeplitz and Hankel Matrices}
\resizebox{.8\columnwidth}{!}{
\begin{tabular}{cc}
\\\hline\hline
Generalized Toeplitz & Generalized Hankel \\[0.5ex]
\hline \\
$M_4(T,\alpha,\beta)$ = $\begin{cases}2 & \hspace{13mm}\alpha\ne\beta \\ 2\frac{2}{3} & \hspace{13mm}\alpha = \beta \\ \end{cases}$ & $M_4(H,\alpha,\beta)$ = $\begin{cases} 2 & \hspace{7mm}\text{for any } \alpha,\hspace{1mm}\beta \end{cases}$ \\\\
$M_6(T,\alpha,\beta)$ = $\begin{cases} 5 + \frac{\alpha}{2}\frac{1}{\alpha + \beta} & \alpha < \beta \\ 5 + \frac{\beta}{2}\frac{1}{\alpha + \beta} & \alpha > \beta \\ 11 & \alpha = \beta\end{cases}$ & $M_6(H,\alpha,\beta)$ = $\begin{cases} 5 + \frac{\alpha}{2}\frac{\beta}{(\alpha + \beta)^2} & \alpha < \beta \\ 5 + \frac{\alpha}{2}\frac{\beta}{(\alpha + \beta)^2} & \alpha > \beta \\ 5\frac{1}{2} & \alpha = \beta \end{cases}$\\\\
\hline
\end{tabular}
}    
\end{center}
\end{proof}
\end{lem}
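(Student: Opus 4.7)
The plan is to invoke the already-computed sixth moments and compare them against $C_6$, the sixth moment of the semicircle. Since a distribution whose moments satisfy Riesz's condition is uniquely determined by its moments (as noted in the hypothesis of Theorem \ref{thm:thm11}), exhibiting a single moment that disagrees with the semicircle's suffices to conclude that the limiting distribution is not the semicircle.

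First, I would recall that $C_6 = \tfrac{1}{4}\binom{6}{3} = 5$. Then I would read off $M_6(T,\alpha,\beta)$ from Eq. \eqref{eq:eqref321}: in every case (whether $\alpha<\beta$, $\alpha>\beta$, or $\alpha=\beta$) the expression is of the form $5 + \delta$ with $\delta > 0$, since $\alpha,\beta$ are positive integers so the corrective terms $\tfrac{\alpha}{2(\alpha+\beta)}$, $\tfrac{\beta}{2(\alpha+\beta)}$, and (in the diagonal case) $6$ are all strictly positive. Hence $M_6(T,\alpha,\beta) > C_6$ for every admissible pair $(\alpha,\beta)$.

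The Hankel case is handled identically using Eq. \eqref{eq:eqref323}: whether $\alpha \ne \beta$ or $\alpha=\beta$, the sixth moment is $5 + \tfrac{\alpha\beta}{2(\alpha+\beta)^2}$ or $5\tfrac{1}{2}$, both strictly greater than $5$. Hence $M_6(H,\alpha,\beta) > C_6$ for every admissible pair $(\alpha,\beta)$.

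There is essentially no obstacle here: the whole content of the lemma is contained in the computations of \S\ref{sec:sec31} and \S\ref{sec:sec32}, and the proof amounts to displaying the table of fourth and sixth moments and observing the strict inequality $M_6 > 5 = C_6$. The only subtlety worth flagging is the appeal to uniqueness: one should note that the limiting moments of these ensembles grow slowly enough to satisfy Riesz's condition (they are bounded above by the moments $M_{2k}(T)$ and $M_{2k}(H)$ by Lemmas \ref{lem:lemma32} and \ref{lem:lemma35}, which are known to determine a unique distribution), so that a single mismatched moment rules out the semicircle as the limit.
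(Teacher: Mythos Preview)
Your proposal is correct and matches the paper's approach exactly: the paper's proof simply displays the table of fourth and sixth moments (from Eqs.~\eqref{eq:eqref320}--\eqref{eq:eqref323}) and observes that $M_6(T,\alpha,\beta)$ and $M_6(H,\alpha,\beta)$ never equal $C_6=5$. Your added remarks on strict positivity of the correction terms and on Riesz's condition via Lemmas~\ref{lem:lemma32} and~\ref{lem:lemma35} are helpful clarifications but not additional content beyond what the paper does.
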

In \S\ref{sec:sec5} we complete the proof of Theorem \ref{thm:thm14} by describing proofs of convergence in probability and almost sure convergence that apply to generalized Toeplitz and Hankel matrices.  
\section{Polynomial Toeplitz and Hankel Matrices}\label{sec:sec4}
In this section we work towards proving Theorem \ref{thm:thm15}.  In \S\ref{sec:sec41} we examine the polynomial Toeplitz ensemble in detail and prove that the empirical spectral distributions converge weakly to the semicircle distribution.  In \S\ref{sec:sec42} we show that the same result holds for polynomial Hankel matrices.  
\subsection{Polynomial Toeplitz Matrices}\label{sec:sec41}
Recall that if $p_1(x) = a_mx^m+a_{m-1}x^{m-1}+\cdots+a_0$ \text{ and }$p_2(x) = b_nx^n+b_{n-1}x^{n-1}+\cdots+b_0$ are polynomials with integer coefficients and $m\ne n$, the link function for the polynomial Toeplitz matrices is\footnote{Note that this link function is of the form Eq. \eqref{2.1}.  }
\begin{align}\label{4.1}L_{PT}(i,j) \ := \  \begin{cases}
p_1(i) - p_2(j) & i \le j\\
-p_2(i) + p_1(j) & i > j.
\end{cases}
\end{align}
A $5\times 5$ matrix with $p_1(x) = x^2$ and $p_2(x) = x$ has the structure
\begin{align} A_5= \left( \begin{array}{ccccc}
a_{0} & a_{-1}& a_{-2}& a_{-3} & a_{-4}\\
a_{-1} & a_{2}& a_{1}& a_0 & a_{-1}\\
a_{-2} &a_{1}& a_{6}& a_{5} & a_{4}\\
a_{-3} & a_{0} & a_{5}& a_{12}& a_{11} \\
a_{-4} & a_{-1}& a_{4}& a_{11} & a_{20} \end{array} \right). \end{align}\par
We now show that this link function yields a semicircular limiting spectral distribution.  First, we assume that the polynomials $p_1$ and $p_2$ are monotonic on $\mathbb{N}$.  We will remove this assumption later.
\subsubsection{Catalan Words}
By Lemma \ref{lem:lemma22}, every Catalan word of length $2k$ contributes one to the $2k\textsuperscript{th}$ moment.
\subsubsection{Crossed Words}
To prove that the limiting spectral measure is a semicircle, it suffices to show that all non-Catalan words contribute zero.  Specifically, we show that the contribution of a word that is fully crossed, or a word that has no adjacent pairs, is zero.  This shows that all non-Catalan words contribute zero, since the contribution of any non-Catalan word with adjacent pairs is calculated by adjacent ``lifting'', and any loss in degrees of freedom in lower moments will propagate through these adjacent pairs.
The argument follows as in Lemma \ref{lem:lemma33} by showing that for a specific non-adjacent pair in any fully crossed word, $a_{L(\pi(0),\pi(1))} = a_{L(\pi(2),\pi(3))}$, there are fewer than three degrees of freedom in choosing all four indices.  Without loss of generality, we assume that $a_{i_1i_2},\hspace{1mm}a_{i_3i_4}\in\text{ Zone 1}$ and $m>n$.  The relevant L-value equation can then be written as
\begin{equation}
p_1\left(\pi(0)\right) - p_1\left(\pi(2)\right)\ = \ p_2\left(\pi(1)\right) - p_2\left(\pi(3)\right).
\end{equation}
Although we have chosen the entries to be in specific zones, we relax these restrictions and assume that the only constraint is from the L-value equation.  First, assume that $\pi(1)\ge\pi(3)$, and let $b = \max(b_i)$ for $i\in\{0,1,\dots,n\}$.  Since indices are at least one and at most $N$, $\pi(0)$ and $\pi(2)$ must be chosen so that
\begin{equation}
0 \le p_1\left(\pi(0)\right) - p_1\left(\pi(2)\right) \le \sum_{i=0}^{n}b_{n-i}N^{n-i} - \sum_{i=0}^{n}b_{n-i}.
\end{equation}
We relax the upper bound and simply impose
\begin{equation}
0 \le p_1\left(\pi(0)\right) - p_1\left(\pi(2)\right) \le b(n+1)N^n.
\end{equation}
For large $N$, then, the number of valid choices for $\pi(0)$ and $\pi(2)$ becomes an integral:
\begin{equation}
\int_1^N\int_1^N\mathbb{I}\left(\pi(2)\le\pi(0)\text{ and }p_1\left(\pi(2)\right)\ge p_1\left(\pi(0)\right) - b(n+1)N^n\right)d_{\pi(2)}d_{\pi(0)}.
\end{equation}
We want to show that there are fewer than 2 degrees of freedom in choosing $\pi(0)$ and $\pi(2)$, as this implies $x<3$.  Let $\Pi$ be a placeholder for the integrand and let $C$ be a constant independent of $N$.  We then have
\begin{align}
\begin{split}
\int_1^N\int_1^N\Pi d_{\pi(2)}d_{\pi(0)} &= \int_1^C\int_1^N\Pi d_{\pi(2)}d_{\pi(0)} + \int_C^N\int_1^N\Pi d_{\pi(2)}d_{\pi(0)}\\
 &\le \int_1^C\int_0^N\Pi d_{\pi(2)}d_{\pi(0)} + \int_C^N\int_0^C\Pi d_{\pi(2)}d_{\pi(0)} + \int_C^N\int_C^N\Pi d_{\pi(2)}d_{\pi(0)}\\
& \le N(C-1) + C(N-C) + \int_C^N\int_C^N\Pi d_{\pi(2)}d_{\pi(0)}.
\end{split}
\end{align}
To check if there are two degrees of freedom, divide by $N^2$ and take the limit as $N$ tends to infinity; we can ignore the first two terms above.  For the third term, let $\tilde{p}_1(x) = \frac{p_1(x)}{a_m}$:
\begin{align}\label{4.9}
\begin{split}
 \int_C^N\int_C^N\Pi d_{\pi(2)}d_{\pi(0)}
 &\ = \ \int_C^N\int_C^{\pi(0)}\mathbb{I}\left(\tilde{p}_1\left(\pi(2)\right)\ge \tilde{p}_1\left(\pi(0)\right) - \frac{b(n+1)}{a_m}N^n\right) d_{\pi(2)}d_{\pi(0)}.\\
\end{split}
\end{align}
\begin{lem}[Polynomials Dominated by Leading Term]\label{lem:lemma41} Let $p(x) = x^k+a_{k-1}x^{k-1}+\cdots+a_0$.  We can choose $x$ large enough such that $(1-\epsilon)x^k<p(x)<(1+\epsilon)x^k$.\end{lem}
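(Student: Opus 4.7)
The plan is to reduce the inequality to an $\epsilon$-bound on a vanishing tail. Dividing the target inequality $(1-\epsilon)x^k < p(x) < (1+\epsilon)x^k$ by $x^k$ (valid for $x > 0$), it is equivalent to showing
\[
\left|\frac{p(x)}{x^k} - 1\right| \; = \; \left|\frac{a_{k-1}}{x} + \frac{a_{k-2}}{x^2} + \cdots + \frac{a_0}{x^k}\right| \; < \; \epsilon
\]
for all sufficiently large $x$. So the whole content of the lemma is that each lower-order term $a_{k-i}/x^i$ can be made arbitrarily small by enlarging $x$.

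First I would apply the triangle inequality to bound the sum above by $\sum_{i=1}^{k} |a_{k-i}|/x^i$. Set $A := \max_{0 \le j \le k-1} |a_j|$; then this is at most $A \sum_{i=1}^{k} x^{-i}$. For $x \ge 1$ we have $x^{-i} \le x^{-1}$ for every $i \ge 1$, so the bound becomes $A k / x$. Choosing $x > \max(1, Ak/\epsilon)$ forces this quantity to be less than $\epsilon$, which yields the desired two-sided inequality.

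There is no real obstacle here: this is a standard quantitative restatement of the fact that a monic polynomial is asymptotic to its leading term. The only mild subtlety is ensuring the threshold on $x$ is uniform in the coefficients, which is handled by replacing each $|a_{k-i}|$ with the single constant $A$. The explicit threshold $x > \max(1, Ak/\epsilon)$ is what the later proof of Lemma \ref{lem:lemma33}-style arguments in \S\ref{sec:sec41} will need when passing from the Diophantine bounds on $\tilde{p}_1(\pi(0))$ and $\tilde{p}_1(\pi(2))$ back to polynomial inequalities on the indices themselves.
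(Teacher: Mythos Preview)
Your argument is correct and follows essentially the same approach as the paper: both divide through by $x^k$, write $p(x)/x^k - 1$ as the lower-order part $q(x)/x^k$, and use that this quotient tends to zero. Your version is in fact more explicit, supplying the concrete threshold $x > \max(1, Ak/\epsilon)$ where the paper simply appeals to the dominance of $x^k$ over $q(x)$.
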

\begin{proof}
Let $q(x) = a_{k-1}x^{k-1}+\cdots+a_0$.  Then $\frac{p(x) - x^k}{x^k} = \frac{q(x)}{x^k}$.  Since $x^k$ dominates the leading order terms of $q(x)$ in the limit of large $x$, we can choose $x$ large enough such that for any $\epsilon'>0$,\hspace{1mm} $\frac{p(x) - x^k}{x^k} = \epsilon'$.  This gives us $p(x) = (1+\epsilon')x^k$.  If we let $\epsilon = 2 \epsilon'$, we have $(1-\epsilon)x^k<p(x)<(1+\epsilon)x^k$.
\end{proof}
By Lemma \ref{lem:lemma41}, for any $\epsilon$ there exists a $C$ large enough such that the integral in Eq. \eqref{4.9} is less than or equal to
\begin{align}
\begin{split}
{} & \int_C^N\int_C^{\pi(0)}\mathbb{I}\left((1+\epsilon)\pi(2)^m\ge(1-\epsilon)\pi(0)^m - \frac{b(n+1)}{a_m}N^n\right)d_{\pi(2)}d_{\pi(0)}\\
&\ = \ \int_C^N\int_C^{\pi(0)}\mathbb{I}\left(\pi(2)\ge\left((1-2\epsilon+O(\epsilon^2))\pi(0)^m - \frac{b(n+1)}{a_m(1+\epsilon)}N^n\right)^{\frac{1}{m}}\right)d_{\pi(2)}d_{\pi(0)}.
\end{split}
\end{align}
Split the outer integral at $\left(\frac{3b(n+1)N^n}{a_m(1+\epsilon)}\right)^{\frac{1}{m}}$.  The first term becomes negligible when we divide by $N^2$, and taking the limit as $N$ tends to infinity yields
\begin{align}
\begin{split}
{} & \int_C^{\left(\frac{3b(n+1)N^n}{a_m(1+\epsilon)}\right)^{\frac{1}{m}}}\int_C^{\pi(0)}\mathbb{I}\left(\pi(2)\ge\left((1-2\epsilon+O(\epsilon^2))\pi(0)^m - \frac{b(n+1)}{a_m(1+\epsilon)}N^n\right)^{\frac{1}{m}}\right)d_{\pi(2)}d_{\pi(0)}\hspace{90mm} \\
& \le \left(\frac{3b(n+1)N^n}{a_m(1+\epsilon)}\right)^{\frac{2}{m}}\frac{N^{\frac{2}{m}}}{2} - C\left(\frac{3b(n+1)N^n}{a_m(1+\epsilon)}\right)^{\frac{1}{m}}+\frac{C^2}{2},
\end{split}
\end{align}
since $\frac{n}{m}<1$.  For the second term, first pull out a factor of $\pi(0)^m$:
\begin{align}
\begin{split}
{} & \int_{\left(\frac{3b(n+1)N^n}{a_m(1+\epsilon)}\right)^{\frac{1}{m}}}^N\int_C^{\pi(0)}\mathbb{I}\left(\pi(2)\ge\left((1-2\epsilon+O(\epsilon^2))\pi(0)^m - \frac{b(n+1)}{a_m(1+\epsilon)}N^n\right)^{\frac{1}{m}}\right)d_{\pi(2)}d_{\pi(0)}\hspace{90mm} \\
& \le \int_{\left(\frac{3b(n+1)N^n}{a_m(1+\epsilon)}\right)^{\frac{1}{m}}}^N \pi(0)-\pi(0)\left(1-2\epsilon+O\left(\epsilon^2\right) - \frac{b(n+1)}{a_m(1+\epsilon)\pi(0)^m}N^n\right)^{\frac{1}{m}}d_{\pi(2)}d_{\pi(0)}.
\end{split}
\end{align}
From the lower bound of the integral, the second integrand term is at most\\ $\left(1-\frac{1}{3}-2\epsilon+O\left(\epsilon^2\right)\right)^{\frac{1}{m}}$.  We then apply the binomial expansion and get
\begin{align}
\begin{split}
\int_{\left(\frac{3b(n+1)N^n}{a_m(1+\epsilon)}\right)^{\frac{1}{m}}}^N \pi(0)-\pi(0)\left(1-\frac{2\epsilon}{m}+\frac{O\left(\epsilon^2\right)}{m} - \frac{b(n+1)}{ma_m(1+\epsilon)\pi(0)^m}N^n+\cdots\right)d_{\pi(2)}d_{\pi(0)}.
\end{split}
\end{align}
Since $m>n$ the terms that depend on $N^2$ are
\begin{align}
\begin{split}
\frac{N^2\epsilon}{m} + N^2O\left(\epsilon^2\right),
\end{split}
\end{align}
but we can choose $C$ large enough that $\epsilon\rightarrow 0$. Thus
\begin{align}
\begin{split}
\text{lim}_{N\rightarrow\infty}\frac{1}{N^2}\int_1^N\int_1^N\Pi d_{\pi(2)}d_{\pi(0)}\ = \ 0,
\end{split}
\end{align}
which implies that $\pi(0)$ and $\pi(2)$ are chosen with fewer than two degrees of freedom.  Choose $\pi(1)$, then $\pi(3)$ is fixed by the L-value equation, and there are fewer than three degrees of freedom in total.  Assuming $\pi(1)<\pi(3)$ similarly occurs with fewer than three degrees of freedom.  Choosing any other set of zones for the matched pair yields the same results, as will allowing $n>m$.  In all cases $x<3$.\par
Now we remove the assumption that $p_1$ and $p_2$ are monotonic.  Choose an upper bound on the absolute value of the roots of the first derivatives of the polynomials, $\frac{dp_1(x)}{dx}$ and $\frac{dp_2(x)}{dx}$.  One example, given by Rouch\'{e}'s Theorem\footnote{See Stein and Shakarchi \cite{SS}.}, is $R=\lceil\max(R_1,R_2)\rceil$, where
\begin{align}
\begin{split}
{} & R_1=1+\frac{1}{m|a_m|}\max(|a_1|,2|a_2|,\dots,(m-1)|a_{m-1}|)\\
{} & R_2=1+\frac{1}{n|b_n|}\max(|b_1|,2|b_2|,\dots,(n-1)|b_{n-1}|).
\end{split}
\end{align}
In counting the contribution to the even moments from fully crossed words, using Eq. \eqref{1.21}, we first count the number of terms where at least one entry $a_{i_s,i_{s+1}}$ is located in the upper-left $R\times R$ submatrix of entry locations.  It is in this region that $p_1$ and $p_2$ may not be monotonic, because the roots of the first derivatives are contained there.  For the $2k\textsuperscript{th}$ moment there are at most $k+1$ degrees of freedom.  One degree comes from choosing the first index arbitrarily, and the other $k$ degrees come from the $k$ choices of L-values for the $k$ matched pairs of entries.  When an entry is fixed in the $R\times R$ submatrix we lose a degree of freedom, because then there is a fixed number of choices independent of $N$ for a generating index to conform to that choice of L-value.  So, choose $N$ large enough and we only need to consider terms in which no entry appears in the upper-left $R\times R$ submatrix.  Since link function polynomials are monotonic outside of this submatrix, the results above apply to all polynomial link functions of the form Eq. \eqref{4.1}.
\subsection{Polynomial Hankel Matrices}\label{sec:sec42}
Recall that if $p_1(x) = a_mx^m+a_{m-1}x^{m-1}+\cdots+a_0$\text{ and }$ p_2(x) = b_nx^n+b_{n-1}x^{n-1}+\cdots+b_0$ are polynomials with integer coefficients and $m\ne n$, the link function for the polynomial Hankel matrices is\footnote{Note that this link function is of the form Eq. \eqref{2.1}.  }
\begin{align}L_{PH}(i,j) \ := \  \begin{cases}
p_1(i) + p_2(j) & i \le j\\
p_2(i) + p_1(j) & i > j.
\end{cases}
\end{align}
We show that this link function yields a semicircular limiting spectral distribution.  First, assume that the polynomials $p_1$ and $p_2$ are monotonic on $\mathbb{N}$.
\subsubsection{Catalan Words}
By Lemma \ref{lem:lemma22}, every Catalan word of length $2k$ contributes one to the $2k\textsuperscript{th}$ moment.
\subsubsection{Crossed Words}
To prove that the limiting spectral measure is a semicircle, it suffices to show that all non-Catalan words contribute zero.  Following the argument in \S\ref{sec:sec41}, pick matched entries appropriately and count the number of solutions to show that $x<3$.  Without loss of generality, assume that $a_{i_1i_2},\hspace{1mm}a_{i_3i_4}\in\text{ Zone 1}$ and $m>n$.  The relevant L-value equation can be written as
\begin{equation}
p_1\left(\pi(0)\right) - p_1\left(\pi(2)\right)\ = \ p_2\left(\pi(3)\right) - p_2\left(\pi(1)\right).
\end{equation}
At this point, the methods from the previous section apply.  In \S\ref{sec:sec5} we complete the proof of Theorem\ref{thm:thm15} by describing proofs of convergence in probability and almost sure convergence that apply to polynomial Toeplitz and Hankel matrices.   
\section{Convergence}\label{sec:sec5}
For each of the ensembles in this paper, all higher moments exist and are finite and the link functions satisfy Property B.  As a result, as $N\rightarrow \infty$ the empirical measures for matrices in each ensemble converge in probability and almost surely to a unique and universal limiting spectral distribution.\footnote{See \cite{B} for proof of the fact that the limiting distribution is universal for all matrix ensembles that satisfy Property B.}  The following sections briefly summarize general proofs of convergence for broad classes of random matrix ensembles, which completes the proof of the convergence claims in Theorem \ref{thm:thm14} and Theorem \ref{thm:thm15}.
\subsection{Existence and Uniqueness of Limiting Spectral Distributions}Arguments in \cite{B} prove that if a random matrix ensemble has a link function that satisfies Property B and the limiting moments exist, then the limiting spectral distribution exists.  Moreover, the limiting spectral distribution is uniquely specified by its moments.  Essentially, \cite{B} shows that Property B requires $M_{2k}(N) \le \frac{(2k)!}{2^kk!}\Delta(L)^k + O\left(\frac{1}{N}\right)$.  As $N\rightarrow\infty$, $M_{2k}\le \frac{(2k)!}{2^kk!}\Delta(L)^k$, which satisfies Riesz's condition.  By Theorem \ref{thm:thm12}, the limiting spectral distribution of the ensemble exists and is uniquely determined.
\subsection{Convergence in Probability} For convergence in probability, assume that all moments $M_{k}$ exist, are finite, and uniquely determine a probability distribution.  The empirical spectral distributions converge in probability to the limiting spectral distribution if the empirical moments converge in probability to the limiting moments.  Arguments in \cite{HM} for Toeplitz matrices show that by applying the triangle inequality and Chebyshev's Inequality to Eq. \eqref{eq:eq119}, it suffices to prove that for all nonnegative integers $k$,
\begin{align}\label{eq:eq51}
\begin{split}
\text{lim}_{N\rightarrow\infty}\left(\mathbb{E}\left[M_{k}\left(A_N\right)^2\right] - \mathbb{E}\left[M_{k}\left(A_N\right)\right]^2\right)\ = \ 0.
\end{split}
\end{align}
For our matrix ensembles, all remaining steps of the proof that count the contributions in Eq. \eqref{eq:eq51} follow trivially except changes in the constants $O_{k}\left(\frac{1}{N}\right)$, which do not alter the results.
\subsection{Almost Sure Convergence}For almost sure convergence, assume that all moments $M_k$ exist, are finite, and uniquely determine a probability distribution.  The empirical spectral distributions converge almost surely to the limiting spectral distribution if the empirical moments converge almost surely to the limiting moments.  The arguments in \cite{HM} designed for Toeplitz matrices again show that by applying triangle inequality and Chebyshev's Inequality to Eq. \eqref{eq:eq118}, it suffices to prove that for every nonnegative integer $k$,
\begin{align}
\begin{split}
\lim_{N\rightarrow\infty}\mathbb{E}[|M_k(A_N)-\mathbb{E}[M_k(A_N)]|^4]=O\left(\frac{1}{N^2}\right).
\end{split}
\end{align}
Then, by using combinatorics and the Borel-Cantelli Lemma, it can be shown that $M_k(A_N)\rightarrow M_k$ outside of a set of measure zero.  Again, for our matrix ensembles, all of the steps of the proof follow trivially except changes in the constants $O_k\left(\frac{1}{N^2}\right)$, which do not alter the arguments.
\section{Future Work: Other Polynomial Link Functions}\label{sec:sec6}
What can be said about the limiting spectral distribution for other polynomial link functions of the form Eq. \eqref{2.1}?  For example, let $\alpha$,$\hspace{1mm}\beta$ and $n$ be positive integers, and
 \begin{align} L_{\alpha,\beta}(i,j)\ = \ \begin{cases}
       \alpha i^n - \beta j^n  & i \le j \\
       -\beta i^n + \alpha j^n & i > j.
     \end{cases}
\end{align}
Using the methods described in this paper, we find that for $\alpha = \beta$ and $n=2$,
\begin{align}
\begin{split}
M_4\ = \ 2+\frac{8-\pi+2\text{Log}(4)}{12} < 2\frac{2}{3}.
\end{split}
\end{align}
Evidently, raising the variables to a higher power reduced the value of the fourth moment compared to the original real symmetric Toeplitz ensemble.  There are many other types of bivariate polynomial link functions to be explored, and there is potential for new and interesting limiting distributions to arise.
\appendix
\section{Numerical Methods}\label{sec:sec7}
Numerical methods were an invaluable tool that illuminated moment contributions and helped to guide all of the arguments in this paper.  For each link function, we studied simulations of the moment values and histograms of the normalized eigenvalues.  For each moment and for a fixed value of $N$ we used the Eigenvalue-Trace Lemma to calculate the moment of the eigenvalue distribution for a particular matrix and then averaged over a large number of such random matrices to get an approximation for the average limiting moment.   \par
In the four tables below we present the data from simulating 1,000 real symmetric $2000 \times 2000$ matrices with examples of the generalized Toeplitz, generalized Hankel, polynomial Toeplitz, and polynomial Hankel link functions.  While this method was very accurate for low moments, for higher moments the big-Oh constants grow quite large and make it computationally difficult to simulate a representative sample of sufficiently large matrices.\\\\
{\small\textsc{Table 2.} Generalized Toeplitz Sixth Moment}\hspace{5.00mm}{\small\textsc{Table 3.} Generalized Hankel Sixth Moment}
\begin{center}
\begin{tabular}{cc}
\centering
\resizebox{.5\columnwidth}{!}{
\begin{tabular}{ccccc}
\hline\hline
$\alpha$ & $\beta$ & Predicted & Observed & Observed/Predicted \\[0.5ex]
\hline \\
1 & 1 & 11.000 & 11.025 & 1.002  \\\\
1 & 2 & 5.167 & 5.077 & 0.983 \\\\
1 & 3 & 5.125 & 5.059 & 0.987 \\\\
1 & 4 & 5.100 & 5.021 & 0.985 \\\\
1 & 5 & 5.083 & 5.030 & 0.990 \\\\
\hline
\end{tabular}
}
&
\resizebox{.5\columnwidth}{!}{
\begin{tabular}{ccccc}
\hline\hline
$\alpha$ & $\beta$ & Predicted & Observed & Observed/Predicted \\[0.5ex]
\hline \\
1 & 1 & 5.500 & 5.500 & 1.000 \\\\
1 & 2 & 5.111 & 5.069 & 0.992 \\\\
1 & 3 & 5.094 & 5.048 & 0.991  \\\\
1 & 4 & 5.080 & 5.017 & 0.988 \\\\
1 & 5 & 5.069 & 5.028 & 1.008 \\\\
\hline
\end{tabular}
}
\end{tabular}
\end{center}
\vspace{5mm}
\hspace{4.5mm}{\small\textsc{Table 4.} Polynomial Toeplitz} \hspace{30mm} {\small\textsc{Table 5.} Polynomial Hankel}
\begin{center}
\begin{tabular}{cc}
\centering
\resizebox{.5\columnwidth}{!}{
\begin{tabular}{cccc}
\hline\hline
Moment & Predicted & Observed & Observed/Predicted \\[0.5ex]
\hline \\
$2$ & 1.000 & 1.000 & 1.000 \\\\
$4$ & 2.000 & 2.000 & 1.000 \\\\
$6$ & 5.000 & 5.006 & 1.001 \\\\
$8$ & 14.000 & 14.014 & 1.001 \\\\
$10$ & 42.000 & 42.086 & 1.002 \\\\
\hline
\end{tabular}
}
&
\resizebox{.5\columnwidth}{!}{
\begin{tabular}{cccc}
\hline\hline
Moment & Predicted & Observed & Observed/Predicted \\[0.5ex]
\hline \\
$2$ & 1.000 & 1.000 & 1.000 \\\\
$4$ & 2.000 & 2.000 & 1.000 \\\\
$6$ & 5.000 & 5.004 & 1.001 \\\\
$8$ & 14.000 & 14.02 & 1.001 \\\\
$10$ & 42.000 & 42.078 & 1.002 \\\\
\hline
\end{tabular}
}
\end{tabular}
\end{center}

\end{document}